\numberwithin{equation}{section}
\def\qed{\hfill$\Box$\vspace{12pt}}
\def\d{\mathrm{d}}
\newcommand{\var}{\mathrm{Var}}
\newcommand{\R}{\mathbb{R}}
\renewcommand{\[}{\left[}
\renewcommand{\ge}{\geqslant}
\renewcommand{\geq}{\geqslant}
\renewcommand{\epsilon}{\varepsilon}
\renewcommand{\cite}{\citet}
\DeclareMathOperator*{\argmax}{arg\,max}
\theoremstyle{plain}
\newtheorem{theorem}{Theorem}
\newtheorem{corollary}{Corollary}
\newtheorem{lemma}{Lemma}
\newtheorem{proposition}{Proposition}
\theoremstyle{definition}
\newtheorem{definition}{Definition}
\newtheorem{example}{Example}
\theoremstyle{remark}
\newtheorem{remark}{Remark}
\theoremstyle{definition}
\title{A non-zero-sum game  with  reinforcement learning under mean-variance framework }
\author[a]{Junyi Guo}
\author[a]{Xia Han}
\author[b]{Hao Wang\thanks{Corresponding author.

\ \ \text{E-mail addresses: jyguo@nankai.edu.cn (J. Guo); xiahan@nankai.edu.cn (X. Han);}

\ \ \text{hao.wang@mail.nankai.edu.cn (H. Wang); kcyuen@hku.hk(K.C. Yuen)}}}
\author[c]{Kam Chuen Yuen}
\affil [a] {School of Mathematical Sciences and LPMC, Nankai University, Tianjin, 300071, China}
\affil [b] {School of Mathematical Sciences, Nankai University, Tianjin, 300071, China}
\affil [c] {Department of Statistics and Actuarial Science, The University of Hong Kong, Pokfulam Road, Hong Kong}
\date{}
\begin{document}

\maketitle

\begin{abstract}

In this paper, we investigate a competitive market involving two agents who consider both their own wealth  and the wealth gap with their opponent. Both agents can invest in a financial market consisting of a risk-free asset and a risky asset, under conditions where model parameters are  partially or completely unknown. This setup gives rise to a  non-zero-sum differential game within the framework of reinforcement learning (RL). Each agent aims to maximize his own Choquet-regularized, time-inconsistent mean-variance objective. 
Adopting the dynamic programming approach, we  derive a time-consistent Nash equilibrium strategy in a general incomplete market setting. Under the additional assumption of  a Gaussian mean return model, we obtain an explicit analytical solution, which facilitates the development of a practical   RL algorithm. Notably, the proposed  algorithm achieves uniform convergence, even though  the conventional policy improvement theorem does not apply to  the equilibrium policy. Numerical experiments demonstrate the robustness and effectiveness of the algorithm, underscoring its potential for practical implementation.

\vspace{5mm}
\noindent\textbf{Keywords:} Stochastic processes; Reinforcement learning; Choquet regularizers; Mean-variance framework; Time-consistent Nash equilibrium

\bigskip

\end{abstract}

\section{Introduction}

Since the seminal work of \cite{M52}, the mean-variance  criterion has emerged as a cornerstone in mathematical finance,  highlighted by significant contributions such as \cite{L00} and \cite{ZL00}.   It is well known that mean-variance problem has an inherent issue of time inconsistency,  thereby Bellman optimality principle cannot be applied. Many studies  addressing this issue resort to a pre-commitment strategy at the initial time and employ the Lagrangian method to  solve the mean-variance problem as seen in the aforementioned works. However, the pre-committed solution lacks time consistency and is applicable only for a decision-maker at the initial time.  
 
  In pursuit of a   time-consistent strategy, \cite{BM10} formulated the problem within a game theoretic framework and derived the  extended Hamilton-Jacobi-Bellman (HJB) equation  through a  verification theorem. Based on this groundwork,  \cite{BMZ14} assumed that the insurer's risk aversion is inversely proportional to the current wealth, and obtained the time-consistent strategy.   \cite{BKM17} provided a general framework for handling time-inconsistency, leading to the so-called equilibrium policy that can be regarded as a subgame perfect Nash equilibrium in dynamic games. Expanding on these efforts,  \cite{DJKX21}  proposed a dynamic portfolio choice model with the mean-variance criterion for log-returns, and derived
 time-consistent portfolio policies which are analytically tractable even under some incomplete market
settings. Furthermore, the game theoretic approach to tackling time-inconsistent problems has also been extended in the actuarial literature, as demonstrated by works from  \cite{ZLG16}, \cite{CS19}, and \cite{LY21}.

In this paper, we consider the scenario where model parameters are partially or completely unknown, aiming to learn a practical exploratory equilibrium policy under the mean-variance criterion within an incomplete market, as discussed in \cite{DDJ23}.  
In fact,  the stochastic control problems  under the continuous-time RL framework with continuous state and action  have attracted extensive attention from scholars recently. \cite{WZZ20} first established a continuous-time  RL framework with continuous state and action from the perspective of stochastic control and proved that the optimal exploration strategy for the linear–quadratic (LQ) control problem in the infinite time horizon is Gaussian. Furthermore, \cite{WZ20} applied this RL framework for the first time to solve the continuous-time mean-variance  problem. 
Motivated by   \cite{WZZ20},  \cite{DDJ23} extended the exploratory stochastic control framework  to an incomplete
market, where the asset return correlates with a stochastic market state,  and learned  an equilibrium policy under  a mean-variance criterion. \cite{JSW22}  studied
the exploratory Kelly problem by considering both the amount
of investment in stock and the portion of wealth in stock as
the control for a general time-varying temperature
parameter.  \cite{HWZ23} first  introduced another kind of index that can measure the randomness of actions called Choquet regularization. They showed that  the optimal exploration distribution of LQ control problem with infinite time horizon is  no longer necessarily Gaussian as in \cite{WZZ20}, but are dictated by the choice of Choquet regularizers. \cite{GHW23} further  studied    an exploratory mean-variance   problem  with the Choquet regularizers being used to measure the level of exploration.

Interactions among agents in real-world scenarios often entail a blend of competitive and cooperative dynamics. Thus, in contrast to the aforementioned references, this paper assumes two competitive agents who consider not only their own wealth but also the wealth gap with their opponent. This setup aligns with prior investigations into non-zero-sum stochastic differential games, which can be traced back to seminal works by \cite{I65} and \cite{P67}, and subsequently expanded upon by scholars including \cite{E76}, \cite{BF00},  \cite{B20}, \cite{ES11},  \cite{BSYY14}, and \cite{ET15}. To the best of our knowledge,   (non-)zero-sum games  in the continuous-time RL settings have not been considered before except in \cite{SJ23}  where  an entropy-regularized continuous-time LQ two-agent zero-sum stochastic differential game problem was considered,  and  they  designed a policy iteration method to derive the optimal strategy for a case with only one unknown model parameter.

Compared with the existing literature, this paper presents three main differences and contributions.

\textbf{Firstly}, in  traditional time-consistent optimization problems, policy iteration typically relies on the policy improvement theorem, as detialed in \cite{JZ22b} and \cite{GHW23}. This theorem ensures that each iteration enhances the overall strategy. However, for  time-inconsistent problems,  although iterating policies is still  feasible, there is no guarantee  that each iteration  leads to an improved policy.    
This presents a significant challenge when attempting to extend policy iteration methods to time-inconsistent settings. Nevertheless, we show that, our approach guarantees uniform convergence of the equilibrium strategy, in contrast to the local convergence typically observed in the single-agent framework, as shown in \cite{DDJ23}.

\textbf{Secondly}, unlike single-agent scenarios, our non-zero-sum game problem naturally fits within the domain of applying RL techniques in multi-agent systems. The pioneering work of \cite{L94} introduces  Q-learning in zero-sum games, marking the early development of multi-agent reinforcement learning. Building on this , \cite{Lit01} proposed the Friend-or-foe Q-learning algorithm for general-sum games, while \cite{FNFATKW17} proposed a centralized  multi-agent learning method using deep learning, which enhances the collaborative to improve agent collaboration in complex tasks. Although centralized algorithms theoretically guarantee convergence and stability, they often encounter practical challenges, including dimensionality explosion and increased system complexity. For further discussion on multi-agent algorithms,   we refer to \cite{YW20} and \cite{ZYB21}. In contrast, the unique structure of our model allows the differential stochastic game to be decomposed into two independent single-agent problems within a centralized multi-agent framework, thus mitigating the dimensionality explosion inherent in centralized methods. To the best of our knowledge, this paper is the first to address equilibrium policies in time-inconsistent problems within the context of reinforcement learning for non-zero-sum differential games.

\textbf{Thirdly}, in contrast to \cite{JSW22}, \cite{DDJ23}, and \cite{SJ23}, we replace the differential entropy used for regularization with Choquet regularizers. As noted in \cite{HWZ23} and \cite{GHW23}, Choquet regularizers offer several theoretical and practical advantages RL. The broad class of Choquet regularizers  enable the comparison and selection of specific regularizers to meet the unique objectives of each learning problem. In particular, it is more natural for agents to choose different regularizers based on their individual preferences, further enhancing the flexibility and applicability of our approach.

The rest of the paper is organized as follows. In Section \ref{sec:2}, we introduce the exploratory mean–variance problem within the framework of RL  under the non-zero-sum differential game setting. Section \ref{sec:3} presents   the  Nash equilibrium mean-variance policy. In Section \ref{sec:4}, we show a policy iteration procedure and analyze its convergence  based on the Gaussian mean return model. In Section \ref{sec:5}, we propose  an RL algorithm based on the convergence analysis,  and provide numerical results to illustrate the implementation of the algorithm in Section \ref{num}.   Finally, we conclude in Section \ref{sec:6}.

\section{Formulation of problem}\label{sec:2}
Throughout the paper, we assume that $(\Omega,\mathcal F, \mathbb P)$ be an atomless probability space. With a slight abuse of notation, we denote by $\mathcal M$  both the set of Borel probability measures on $\mathbb R$ and the set of distribution functions of real random variables. For $\Pi \in \mathcal M$ and $x \in \mathbb R$, we have $\Pi(x)=\Pi((-\infty , x])$. Let $\mathcal M ^p,\ p \in [1,\infty)$, be the subset of $\mathcal M$ whose elements have finite $p$-th moment. We write $X \sim \Pi$ if random variable $X$ has distribution $\Pi$, and $X \overset{d}= Y$ if $X$ and $Y$ have the same distribution. For $\Pi \in \mathcal M^2$, we denote by $\mu(\Pi)$ and $\sigma ^2 (\Pi)$ the mean and variance of $\Pi$, respectively. 
\subsection{Exploratory Wealth Process}
In the financial market under study,  we posit the inherent incompleteness of the market as in \cite{BC10} and \cite{DJKX21}, allowing for continuous trading in both a risk-free asset and a risky asset within the finite time horizon $[0,T]$. The price process $S_0(t)$  governing the risk-free asset is  given by 
\begin{align*}
	\d S_0(t) = rS_0(t)\d t,
\end{align*}
where $r>0$ represents the constant risk-free interest rate. The price process $S(t)$ of the risky asset is given by
\begin{align}\label{dynamics}
	\dfrac{\d S(t)}{S(t)}=a(t,Y(t))\d t+b(t,Y(t))\d B(t).
\end{align}
Here, $B(t)$ is a Brownian motion defined on the filtered probability space $(\Omega, \mathcal F, \{\mathcal F_t\}_{0\leqslant t\leqslant T}, \mathbb P)$ adhering to standard conditions. Additionally,  $Y(t)$ is a diffusion process satisfying
\begin{align*}
	Y(t)=m(t,Y(t))\d t+v(t,Y(t))[\rho dB(t)+\sqrt{1-\rho ^2}d\widetilde B(t)],
\end{align*}
where $\rho \in [0,1]$ and the  Brownian motion $\widetilde B(t)$ is  defined on $(\Omega, \mathcal F, \{\mathcal F_t\}_{0\leqslant t\leqslant T}, \mathbb P)$ independent of $B(t)$.

Within the market context,  the functions $a(\cdot , \cdot)$, $b(\cdot , \cdot)$, $m(\cdot , \cdot)$ and $v(\cdot , \cdot)$ are deterministic in nature. The state process $Y(t)$ can represent some economic factors associated with the expectation or volatility of stock return.  Examples include dividend yields in predictability models or instantaneous volatility in stochastic volatility models. We note that the market is incomplete as
trading in the stock and bond cannot perfectly hedge the changes in the stochastic investment
opportunity set. The market model described above exhibits substantial generality, encompassing numerous well-known models such as the time-varying Gaussian mean return model and the stochastic volatility model   as special  cases; see Example \ref{exa:1} below.  Particularly, we are in the case of the complete market with constant parameters, where $a(t, y) \equiv a$ and $b(t, y) \equiv b$.
\begin{example}\label{exa:1}
\begin{itemize}\item[(i)]If  the stock price $S(t)$ and the market price of risk $Y(t)$ are governed by
$
\d S(t)/{S(t)}  =(r+\sigma Y(t)) \d t+\sigma \d B(t)$
with 
$
\d Y(t)  =\iota \left(Y-Y(t)\right) \d t+v \d \tilde B(t),
$
where $r, \sigma,  \iota, v$, and $Y$ are all positive constants.   This model, termed the time-varying Gaussian mean return model, effectively captures the intricate interplay between the dynamics of stock prices and market risk over time.
\item[(ii)] If the stock price $S(t)$ and a state variable $Y(t)$ follow
$
\d S(t)/S(t)=(r+\sigma Y(t)^{\frac{1+\alpha}{2 \alpha}}) \d t+Y(t)^{\frac{1}{2 \alpha}} \d B(t)
$
with 
$
\d Y(t)=\iota (Y-Y(t)) d t+v \sqrt{Y(t)} \d \tilde B(t),
$
where $\alpha \neq 0$ is the constant elasticity of the market price of risk $\sigma Y(t)^{\frac{1}{2}}, \sigma \in \mathbb{R}, \iota>0, v>0$, and $Y \in \mathbb{R}$ are all constants, then it is  a stochastic volatility model. This formulation characterizes a stochastic volatility model, acknowledging the non-linear relationship between the state variable and market risk elasticity. 
\end{itemize}
Researchers, including \cite{M80}, \cite{KO96}, \cite{L01}, \cite{W02}, \cite{BC10}, \cite{DJKX21}, and \cite{DDJ23}, have extensively explored dynamic portfolio choice problems within the framework of these two market settings and their specific variations.
\end{example}
In what follows, we consider a non-zero-sum game involving two competing agents or companies, referred  as Agent 1 and Agent 2 for simplicity. Both agents have access to the financial market.
 For $i \in \{1,2\}$, let $u_i=\{u_i(t),0\leqslant t\leqslant T\}$ represent the discounted amount of Agent $i$ invested in the risky asset at time $t$, and the rest of the wealth is invested in the risk-free asset. Define \begin{equation}\label{eq:theta}\theta(t,Y(t))=\frac{a(t,Y(t))-r}{b(t,Y(t))}.\end{equation} Then the  dynamic of discounted wealth process of Agent $i$ under strategy $u_i$ is given as
\begin{align}\label{dynamicx}
	\d X_i^{u_i}(t)&=u_i(t)b(t,Y(t))[\theta(t,Y(t))\d t+\d B(t)].
\end{align}

Based on \cite{WZZ20}, we extend   the control process  \eqref{dynamicx} to a distributional control process and define  the exploratory discounted wealth process for agents. 
Denote by  $\Pi_i(t) \in \mathcal M (U)$  the probability distribution function of  the control $u_i$ at time $t$, where $\mathcal M (U)$ represents the set of distribution functions on $U$.  We write $\Pi_i \in \mathcal M$  for simplicity. It is assumed that the actions of the agent are independent of both   $B(t)$ and $\widetilde B(t)$.  In contrast to the approach presented in \cite{WZZ20}, we additionally consider the correlation between the discounted wealth process and the market state, as well as the correlation between the two agents.  

Now, we attempt to derive the exploratory version of the wealth process  $X_i^{u_i}$ associated with randomized policy $\Pi_i$.  Let $B^n(t)$ and $\widetilde B^n(t),n=1,2,...,N,$ represent $N$ paths independently sampled from $B(t)$ and $\widetilde B(t)$, respectively. Moreover, let $X_i^n(t)$ be the copies of the discounted wealth process of Agent $i$ under strategy $u_i^n$ sampled from $\Pi_i$. Then, for $n=1,2,...,N$ and $i \in \{1,2\}$, the increments of $Y^n(t)$ and the corresponding $X_i^n(t)$ can be written as
\begin{align}\label{delta_Y}
\begin{aligned}
	\Delta Y^n(t)&\equiv Y^n(t+\Delta t)-Y^n(t)\\&\approx m(t,Y^n(t))\Delta t+v(t,Y^n(t))[\rho \Delta B^n(t)+\sqrt{1-\rho ^2}\Delta \widetilde B^n(t)],\end{aligned}
\end{align} and\begin{align}\label{delta_X}
\begin{aligned}
	\Delta X_i^n(t)&\equiv X_i^n(t+\Delta t)-X_i^n(t)\\&\approx u_i^n(t)b(t,Y^n(t))[\theta(t,Y^n(t))\Delta t+\Delta B^n(t)].
\end{aligned}
\end{align}
We denote the exploratory discounted wealth process of Agent $i$ by $X_i^{\Pi_i}(t)$. Consequently, $X_i^n(t)$ can be veiwed as an independent sample from $X_i^{\Pi_i}(t)$. By the law of large numbers and using \eqref{delta_Y}  and \eqref{delta_X},  we observe that, as $N \to \infty$,
\begin{align}
\begin{split}\label{firstmoment}
	&\dfrac{1}{N}\sum\limits_{n=1}^N \Delta X_i^n(t)\approx \dfrac{1}{N}\sum\limits_{n=1}^N u_i^n(t)b(t,Y^n(t))[\theta(t,Y^n(t))\Delta t+\Delta B^n(t)]\\
	&\overset{a.s.}\longrightarrow \mathbb E\left[b(t,Y(t))\theta(t,Y(t))\int_U ud\Pi_i(t,u)\right]\Delta t,
\end{split}
\\
\begin{split}\label{secondmoment}
	&\dfrac{1}{N}\sum\limits_{n=1}^N (\Delta X_i^n(t))^2\approx \dfrac{1}{N}\sum\limits_{n=1}^N (u_i^n(t)b(t,Y^n(t)))^2\Delta t \\
	&\overset{a.s.}\longrightarrow \mathbb E\left[b^2(t,Y(t))\int_U u^2d\Pi_i(t,u)\right]\Delta t,
\end{split}
\end{align}
and 
\begin{align}
\begin{split}\label{bracket}
	&\dfrac{1}{N}\sum\limits_{n=1}^N \Delta X_i^n(t)\Delta Y^n(t)\approx \dfrac{1}{N}\sum\limits_{n=1}^N u_i^n(t)b(t,Y^n(t))\rho v(t,Y^n(t))\Delta t\\
	&\overset{a.s.}\longrightarrow \mathbb E\left[\rho b(t,Y(t))v(t,Y(t))\int_U ud\Pi_i(t,u)\right]\Delta t.
\end{split}
\end{align}
It is well known from the law of large numbers that
$$
	\dfrac{1}{N}\sum\limits_{n=1}^N \Delta X_i^n(t)\overset{a.s.}\longrightarrow \mathbb E[\Delta X_i^{\Pi_i}(t)],$$  $$\dfrac{1}{N}\sum\limits_{n=1}^N (\Delta X_i^n(t))^2\overset{a.s.}\longrightarrow \mathbb E[(\Delta X_i^{\Pi_i}(t))^2],
$$
and
\begin{align*}
	\dfrac{1}{N}\sum\limits_{n=1}^N \Delta X_i^n(t)\Delta Y^n(t)\overset{a.s.}\longrightarrow \mathbb E[\Delta X_i^{\Pi_i}(t)\Delta Y(t)].
\end{align*}
These together with \eqref{firstmoment} -- \eqref{bracket}, motivate our confidence in  the dynamic of $\Delta X_i^{\Pi_i}(t)$, which can be expressed as 
\begin{align}\label{agentdynamic}
\begin{split}
	\d X_i^{\Pi_i}(t)&=b(t,Y(t))\theta(t,Y(t))\mu_i(t)\d t+b(t,Y(t))[\mu_i(t)\d B(t)+\sigma_i(t)\d \overline B_i(t)],
\end{split}
\end{align}
where $$\mu_i(t)=\int_U ud\Pi_i(t,u), ~~\sigma^2_i(t)=\int_U u^2d\Pi_i(t,u)-\mu^2_i(t)$$ with  $\overline B_i$ being the Brownian motion  independent of $B(t)$ and $\widetilde B(t)$. The remaining consideration involves the correlation between  $\overline B_1(t)$ and $\overline B_2(t)$.  It is observed that, as $N\to \infty$,
\begin{align*}
	&\dfrac{1}{N}\sum\limits_{n=1}^N \Delta X_1^n(t)\Delta X_2^n(t)\approx \dfrac{1}{N}\sum\limits_{n=1}^N u_1^n(t)u_2^n(t)b^2(t,Y^n(t))\\
	&\overset{a.s.}\longrightarrow \mathbb E\left[\mu_1(t)\mu_2(t)b^2(t,Y(t))\right]\Delta t,
\end{align*}
leading to $\langle  \overline B_1,\overline B_2  \rangle =0$. By L\'evy's theorem,  we conclude  that $\overline B_1$ is independent of $\overline B_2$.

For $i,j\in\{1,2\}$ and $j\neq i$,  assume that Agent $i$  takes into account not only his own wealth but also the wealth gap between himself and Agent $j$ at the terminal time $T$.  That is, given a strategy $\Pi_j\in \mathcal M$  employed by Agent $j$,   Agent $i$ will choose a strategy $\Pi_i$ to maximize the following objective
\begin{align}\label{mvcriterion}
\begin{split}
	&~\mathbb E_t[(1-k_i)X_i^{\Pi_i}(T)+k_i(X_i^{\Pi_i}(T)-X_j^{\Pi_j}(T))]-\dfrac{\gamma_i}{2}\var_t[(1-k_i)X_i^{\Pi_i}(T)+k_i(X_i^{\Pi_i}(T)-X_j^{\Pi_j}(T))]\\
	&=\mathbb E_t[X_i^{\Pi_i}(T)-k_iX_j^{\Pi_j}(T)]-\dfrac{\gamma_i}{2}\var_t[X_i^{\Pi_i}(T)-k_iX_j^{\Pi_j}(T)],
\end{split}
\end{align}
 where $\mathbb E_t(\cdot)$ and $\var_t(\cdot)$ denote the conditional expectation and variance with given $X_1^{\Pi_1}(t)$, $X_2^{\Pi_2}(t)$ and $Y(t)$, respectively,  $\gamma_i>0$ represents the risk-aversion coefficient for Agent $i$, and $k_i\in (0,1)$ measures the sensitivity of Agent $i$ to the performance of Agent $j$. Notably, a larger  $k_i$
 implies that  Agent $i$ emphasis on the relative performance against his opponent (Agent 
$j$), thereby intensifying the competitiveness of the game. 

Let $\hat X_i^{\Pi_i,\Pi_j}(t)=X_i^{\Pi_i}(t)-k_iX_j^{\Pi_j}(t)$ be the wealth difference of the two agents. It is obvious  from \eqref{agentdynamic} that $\hat X_i^{\Pi_i,\Pi_j}(t)$ follows the dynamic
\begin{align}\label{dynamic}
\begin{split}
	\d \hat X_i^{\Pi_i,\Pi_j}(t)=&b(t,Y(t))[\theta(t,Y(t))(\mu_i(t)-k_i\mu_j(t))\d t\\
	&+(\mu_i(t)-k_i\mu_j(t))\d B(t)+\sigma_i(t)\d\overline B_i(t)-k_i\sigma_j(t)\d\overline B_j(t)].
\end{split}
\end{align}
\subsection{Objective function}

We employ  the \emph{Choquet regularizer} $\Phi_h$ to quantify  randomness.   Given a concave function $h:[0,1]\to \R$  of bounded variation with $h(0)=h(1)=0$ and   $\Pi\in \mathcal M$,
  the Choquet regularizer $\Phi_h$  on $\mathcal M$ is defined as 
 \begin{equation}\label{eq:phi_h}
\Phi_h(\Pi)= \int_\R h\circ \Pi([x,\infty))\d x.\end{equation}
We denote the set of $h:[0,1]\to \R$  by   $\mathcal H$.   In fact, \eqref{eq:phi_h}    is a signed Choquet integral characterized by  \cite{WWW20c}  via comonotonic additivity, which essentially builds on the seminal works of \cite{S89} and \cite{Y87}.
 
As stated in Lemma 2.2 of  \cite{HWZ23}, the regularizer $\Phi_h$ is rigorously defined and serves as an important  metric for quantifying the degree of randomness or exploration within the context of RL.   Specifically, the concavity of 
$h$  ensures that $\Phi_h$   is also  concave. This means  that 
$\Phi_h(\lambda \Pi_1 + (1-\lambda) \Pi_2 ) \ge \lambda \Phi_h( \Pi_1)  + (1-\lambda) \Phi_h(\Pi_2 )$ for all $\Pi_1,\Pi_2\in \mathcal M$ and $\lambda\in [0,1],$    which intuitively implies that the linear combination of two distributions is more random. 
Moreover,  the condition $h(0)=h(1)=0$ means for   any $c\in\R$, $\Phi_h(\delta_c)=0$, where $\delta_c$ is the Dirac mass at $c$. This indicates that  degenerate distributions do not have any randomness measured by $\Phi_h$.
Additionally,   the agents have the flexibility to opt for different regularizers, contingent upon their preferences, as reflected by the distortion function $h$. 	For  more detailed discussions about the properties associated with $\Phi_h$, we refer to \cite{HWZ23} and \cite{GHW23}. 
 
 It is useful to note that $\Phi_h$ admits a quantile representation, see Lemma 1 of \cite{WWW20b}. For a distribution $\Pi\in\mathcal{M}$,  let its   left-quantile for $p\in(0,1]$ be defined as $$Q_\Pi(p)=\inf \left\{x\in \R: \Pi(x) \ge p\right\}, $$
 then we have  \begin{equation}\label{eq:quan_rep}\Phi_h(\Pi) =\int_0^1  Q_\Pi(1-p) \d  h(p) \end{equation} if $h$ is left-continuous.

 For any fixed $\Pi_j \in \mathcal M$, we  incorporate the Choquet regularizer $\Phi_{h_i}$ for Agent $i$,  along with the exploration weight function $\lambda_i(t)$,  into the mean-variance criterion \eqref{mvcriterion}.   Each agent aims to achieve an exploratory mean-variance problem within the framework of RL.   This yields the corresponding objective function
\begin{align}\label{eq:obj}
	J_i(t,\hat x_i,y;\Pi_i,\Pi_j):=\mathbb E_t\left[\int_t^T\lambda_i(s)\Phi_{h_i}(\Pi_i(s))ds+\hat X_i^{\Pi_i,\Pi_j}(T)\right]-\dfrac{\gamma_i}{2}\var_t[\hat X_i^{\Pi_i,\Pi_j}(T)],
\end{align}
where   $\hat X_i^{\Pi_i,\Pi_j}(t)=\hat x_i$ and $\ Y(t)=y$ with $t$ representing the initial time.
\begin{remark} 
We note that a larger $\lambda_i(t)$ promotes increased exploration, as it results in a higher weight  on  $\Phi_{h_i}(\Pi_i(t))$. When $\lambda_i(t) \equiv \lambda_i$, the  exploration weight  remains constant over time. It is often more realistic to set $\lambda_i(t)$ to decrease over time;  for instance, $\lambda(t)$ may follow a power-decaying pattern, expressed as   $\lambda_i(t) = \lambda_0 (T+\lambda)^{\lambda_0}/(t+\lambda)^{\lambda_0+1} $ with  $\lambda_0, \lambda > 0$. Alternatively, $\lambda_i(t)$ may decay exponentially  with  $\lambda_i(t) = \lambda_0 e^{\lambda_0(T-t)}$ with $\lambda_0 > 0$.   For further discussions on selecting $\lambda_i(t)$, we refer to Section 3.4 of \cite{JSW22}. 
 \end{remark}
  For $i,j\in\{1,2\}$ and $i\neq j$,  denote by $\mathcal A(\Pi_j)$ the set of all admissible feedback control  of $\Pi_i$.  Given $\Pi_j \in \mathcal M$,  $\Pi_i$ is said to be in $\mathcal A(\Pi_j)$ if the following conditions hold: 
  \begin{itemize}
\item[(i)]
		For each $s\in[t,T]$, $\Pi_i(s)\in \mathcal M(\mathbb R)$;
		\item[(ii)] There exists a deterministic mapping $\pi_i:[t,T]\times\mathbb R \times\mathbb R\to \mathcal M(\mathbb R)$, such that $\Pi_i(s)=\pi_i(s,\hat X_i^{\Pi_i,\Pi_j}(s),Y(s))$;
	\item[(iii)] For any $A \in \mathcal B(\mathbb R)$, $\{\int_A\Pi_i(s,u)\d u, t\leqslant s \leqslant T\} $ is $\mathcal F_s$--progressively measurable;
		\item[(iv)] $\mathbb E_t\int_t^T [|b(s,Y(s))\theta(s,Y(s))(\mu_i(s)-k_i\mu_j(s))|+b^2(s,Y(s))(\mu_i^2(s)+\sigma^2_i(s))]ds< \infty$;
		\item[(v)] $\mathbb E_t\int_t^T|\lambda_i(s)\Phi_{h_i}(\Pi_i(s))|ds <\infty $.
\end{itemize}
Next, we define the \emph{profile} of the game, which   encapsulates the comprehensive strategic behavior of both agents in the game.

\begin{definition}\label{profile}
	A strategy pair $(\Pi_1,\Pi_2)$ is called the \emph{profile} of the game if it satisfies the following conditions:  given the strategy $\Pi_2$, $\Pi_1$ is an admissible feedback control, and   conversely,  given the strategy $\Pi_1$, $\Pi_2$ is also an admissible feedback control.\end{definition}

As mentioned in the introduction, to seek the time-consistent equilibrium strategy, we formulate the time-inconsistent dynamic optimization problem into a noncooperative game theoretic
framework proposed by  \cite{BM14} and \cite{BKM17}.      
\begin{definition}\label{equilibriumresponse}
	Let $(\Pi_1,\Pi_2)$ be a profile of the game. For $i,j\in\{1,2\}$ and $i\neq j$, $\Pi_i$ is said to be \emph{equilibrium response} of Agent $i$ if for a fixed $h$ and for any initial state $(t,\hat x_i,y)$ and an arbitrary $v_i\in\mathcal M$, $\Pi_i^{v_i,h}$ defined by
	\begin{align*}
		\Pi_i^{v_i,h}(s)=\left\{
		\begin{aligned}
			&v_i,&t\leqslant s\leqslant t+h,\\
			&\Pi_i(s),&t+h\leqslant s\leqslant T,
		\end{aligned}
		\right.
	\end{align*}
	satisfying
	\begin{align*}
		\liminf\limits_{h\rightarrow 0^+}\dfrac{J_i(t,\hat x_i,y;\Pi_i,\Pi_j)-J_i(t,\hat x_i,y;\Pi_i^{v_i,h},\Pi_j)}{h}\geqslant 0.
	\end{align*} The equilibrium response $\Pi_i$ of Agent $i$ can be viewed as a mapping of $\Pi_j$, and thus can be written as $\Pi_i=\pi_i(\Pi_j)$ at this point.   Furthermore, the equilibrium response value function of Agent $i$ is defined as
	\begin{align*}
		\widetilde V_i(t,\hat x_i,y;\Pi_j):=J_i(t,\hat x_i,y;\Pi_i,\Pi_j).
	\end{align*}
\end{definition}

\begin{definition}
	A profile $(\Pi_1^*,\Pi_2^*)$ is called the \emph{time-consistent Nash equilibrium} of the game if for $i,j\in\{1,2\}$ and $i\neq j$, $\Pi_i^*$ is the equilibrium response of Agent $i$, that is $\Pi_1^*=\pi_1(\Pi_2^*)$ and $\Pi_2^*=\pi_2(\Pi_1^*)$. Furthermore, the equilibrium value function of Agent $i$ is given by
	\begin{align*}
		V_i(t,\hat x_i,y):=\widetilde V_i(t,\hat x_i,y;\Pi^*_j)=J_i(t,\hat x_i,y;\Pi_i^*,\Pi_j^*).
	\end{align*}
\end{definition}
\section{Time-consistent Nash equlibrium}\label{sec:3}
In this section, we present the Nash equilibrium for a general incomplete market. It is important to emphasize that the uniqueness of the equilibrium policy remains an open question in the context of time-inconsistent optimization problems, as discussed by  \cite{EP08}.   Therefore, within the framework of game theory, we focus on a specific Nash equilibrium.
\subsection{Verification theorem}Analogous to the work of  \cite{BKM17} on optimization problems involving a broad class of objective functionals,    we present the following verification theorem.

Let  $\mathcal{D}=[0,T]\times\mathbb{R}^2$ and
for any $\psi\in C^{1,2,2}(\mathcal{D})$,  define the generator 
\begin{equation}\label{eq:oper}\begin{aligned} \mathcal L^{\Pi_i,\Pi_j}\psi(t, x, y)=&\displaystyle\frac{\partial\psi}{\partial t}+b(t,y)(\theta(t,y)(\mu_i(t)-k_i\mu_j(t))  \frac{\partial\psi}{\partial  x} +\frac{1}{2}b^2(t,y)\left((\mu_i(t)-k_i\mu_j(t))^2\right.\\&\left.+\sigma^2_i(t)+k^2_i\sigma^2_j(t)\right)\frac{\partial^2\psi}{\partial  x^2} + m(t,y)\frac{\partial\psi}{\partial y}+\frac{1}{2}v^2(t,y) \frac{\partial^2\psi}{\partial y^2}\\&+ \rho v(t,y) b(t,y)\theta(t,y)(\mu_i(t)-k_i \mu_j(t))\frac{\partial^2\psi}{\partial  x \partial y }.\end{aligned}\end{equation}
\begin{theorem}[Verification theorem]
	For $i,j\in\{1,2\}$ and $i\neq j$, fix $\Pi_j$ and suppose that functions $W_i(t,x,y)\in C^{1,2,2}(\mathcal{D})$, $g_i(t,x,y)\in C^{1,2,2}(\mathcal{D})$ and strategy $\Pi_i$ satisfy the following properties:
	\begin{enumerate}[i)]
		\item $W_i$ and $g_i$ solve the extended HJB system
		\begin{align}\label{hjbw}
			&\sup\limits_{\pi_i\in \mathcal M}\left\{\mathcal L^{\pi_i,\Pi_j}W_i(t,\hat x_i,y)-\dfrac{\gamma_i}{2}\mathcal L^{\pi_i,\Pi_j}g_i^2(t,\hat x_i,y)+\gamma_ig_i\mathcal L^{\pi_i,\Pi_j}g_i(t,\hat x_i,y)+\lambda_i(t)\Phi_{h_i}(\pi_i) \right\}=0,
		\end{align}
		and
		\begin{align}\label{hjbg}
			\mathcal L^{\pi_i,\Pi_j}g_i(t,\hat x_i,y)=0
		\end{align}
		with
		\begin{align}\label{terminalcondition}
			W_i(T,\hat x_i,y)=\hat x_i,\ g_i(T,x,y)=\hat x_i,
		\end{align}
		where $\mathcal L^{\pi_i,\Pi_j}$ is the infinitesimal generator given by \eqref{eq:oper}.
		\item $\pi_i$ realizes the supremum in \eqref{hjbw} and $\Pi_i(s)=\pi_i(s,\hat X_i^{\Pi_i,\Pi_j}(s),Y(s))$ is admissible.
	\end{enumerate}
	Then $\Pi_i$ is the equilibrium response of Agent $i$. Furthermore, $W_i(t,\hat x_i,y)=J_i(t,\hat  x_i,y;\Pi_i,\Pi_j)$ is the equilibrium response value function of Agent $i$ and $g_i(t,\hat x_i,y)=\mathbb E_t[\hat X_i^{\Pi_i,\Pi_j}(T)]$.
\end{theorem}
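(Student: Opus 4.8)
The plan is to adapt the verification argument for time-inconsistent control of \cite{BKM17} to the exploratory dynamics \eqref{dynamic} and the Choquet running reward. Throughout I fix $i\ne j$ and the opponent's randomized feedback strategy $\Pi_j$ — equivalently, the coefficient processes $\mu_j,\sigma_j$ entering \eqref{dynamic} — so that $(\hat X_i^{\Pi_i,\Pi_j},Y)$ is a two-dimensional controlled Markov process with generator \eqref{eq:oper}; I take $\Pi_i$ to be the control realizing the supremum in \eqref{hjbw} and abbreviate $\mathcal L^{v}:=\mathcal L^{v,\Pi_j}$ for $v\in\mathcal M$. \textbf{Step 1 (meaning of $g_i$).} First I would apply It\^o's formula to $g_i(s,\hat X_i^{\Pi_i,\Pi_j}(s),Y(s))$ on $[t,T]$ along $\Pi_i$: since $g_i\in C^{1,2,2}$ and the driving Brownian motions $B,\overline B_i,\overline B_j$ of \eqref{dynamic} are mutually independent, the finite-variation part of the expansion is $\mathcal L^{\Pi_i}g_i$, which vanishes by \eqref{hjbg}; admissibility condition (iv) together with the growth of $g_i$ promote the remaining local martingale to a true martingale, so taking $\mathbb E_t$ and using the terminal condition in \eqref{terminalcondition} gives $g_i(t,\hat x_i,y)=\mathbb E_t[\hat X_i^{\Pi_i,\Pi_j}(T)]$, and repeating the argument from an arbitrary starting triple yields the function identity $g_i(s,x,y)=\mathbb E_{s,x,y}[\hat X_i^{\Pi_i,\Pi_j}(T)]$.

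\textbf{Step 2 ($W_i=J_i$ along $\Pi_i$).} Next I would apply It\^o/Dynkin to $W_i$ and to $g_i^2$ along $\Pi_i$ and take $\mathbb E_t$. Since $\Pi_i$ realizes the supremum in \eqref{hjbw} the bracket there is zero, and combined with \eqref{hjbg} this gives $\mathcal L^{\Pi_i}W_i=\tfrac{\gamma_i}{2}\mathcal L^{\Pi_i}g_i^2-\lambda_i(s)\Phi_{h_i}(\Pi_i(s))$ along the trajectory; substituting this into the Dynkin identity for $W_i$, using $W_i(T,\hat x_i,y)=g_i(T,\hat x_i,y)=\hat x_i$, and then invoking Step 1 to replace $g_i^2(t,\hat x_i,y)$ by $(\mathbb E_t[\hat X_i^{\Pi_i,\Pi_j}(T)])^2$, everything collapses to $W_i(t,\hat x_i,y)=\mathbb E_t\int_t^T\lambda_i(s)\Phi_{h_i}(\Pi_i(s))\,\d s+\mathbb E_t[\hat X_i^{\Pi_i,\Pi_j}(T)]-\tfrac{\gamma_i}{2}\var_t[\hat X_i^{\Pi_i,\Pi_j}(T)]=J_i(t,\hat x_i,y;\Pi_i,\Pi_j)$, hence $W_i=\widetilde V_i(\cdot\,;\Pi_j)$. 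Carried out from a general triple this also yields the function identity $W_i=g_i+R_i-\tfrac{\gamma_i}{2}(f_i-g_i^2)$, where $R_i(s,x,y)=\mathbb E_{s,x,y}\int_s^T\lambda_i(u)\Phi_{h_i}(\Pi_i(u))\,\d u$ and $f_i(s,x,y)=\mathbb E_{s,x,y}[\hat X_i^{\Pi_i,\Pi_j}(T)^2]$, so that $f_i-g_i^2$ is the conditional variance of the terminal wealth gap under $\Pi_i$.

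\textbf{Step 3 (equilibrium inequality).} Finally, fix $v_i\in\mathcal M$, let $h>0$ be small, and write $\hat X^{h}:=\hat X_i^{\Pi_i^{v_i,h},\Pi_j}$. Since $\Pi_i^{v_i,h}$ agrees with the feedback control $\Pi_i$ on $[t+h,T]$, conditioning at $t+h$ and combining the tower property, the law of total variance, Step 1 and the identity of Step 2 gives $J_i(t,\hat x_i,y;\Pi_i^{v_i,h},\Pi_j)=\mathbb E_t\int_t^{t+h}\lambda_i(s)\Phi_{h_i}(v_i(s))\,\d s+\mathbb E_t[W_i(t+h,\hat X^{h}(t+h),Y(t+h))]-\tfrac{\gamma_i}{2}\var_t[g_i(t+h,\hat X^{h}(t+h),Y(t+h))]$; the point is that the $\var_t$ of the terminal wealth gap splits into $\mathbb E_t$ of the conditional variance $f_i-g_i^2$ at $t+h$ plus the $\var_t$ of the conditional mean $g_i$ at $t+h$, and the first piece recombines with $\mathbb E_t[g_i(t+h,\cdot)+R_i(t+h,\cdot)]$ into $\mathbb E_t[W_i(t+h,\cdot)]$. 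Subtracting this from $J_i(t,\hat x_i,y;\Pi_i,\Pi_j)=W_i(t,\hat x_i,y)$, applying Dynkin's formula to $W_i\in C^{1,2,2}$, and using that $h^{-1}\var_t[g_i(t+h,\cdot)]\to(\mathcal L^{v_i}g_i^2-2g_i\mathcal L^{v_i}g_i)(t,\hat x_i,y)$ — the quadratic-variation rate of $g_i$, which is precisely what produces the extra terms $-\tfrac{\gamma_i}{2}\mathcal L g_i^2+\gamma_ig_i\mathcal L g_i$ in the extended HJB — one obtains $\liminf_{h\to0^+}h^{-1}\bigl(J_i(t,\hat x_i,y;\Pi_i,\Pi_j)-J_i(t,\hat x_i,y;\Pi_i^{v_i,h},\Pi_j)\bigr)=-\bigl(\mathcal L^{v_i}W_i-\tfrac{\gamma_i}{2}\mathcal L^{v_i}g_i^2+\gamma_ig_i\mathcal L^{v_i}g_i+\lambda_i(t)\Phi_{h_i}(v_i)\bigr)(t,\hat x_i,y)\ge0$, the inequality since the supremum in \eqref{hjbw} equals zero and $v_i$ is admissible. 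By Definition \ref{equilibriumresponse}, $\Pi_i$ is the equilibrium response of Agent $i$; together with Steps 1 and 2 this gives all the assertions.

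\textbf{Main obstacle.} The algebra above is mechanical; the delicate points are analytic: (a) upgrading the local martingales to true martingales and justifying the passage $h^{-1}\mathbb E_t\int_t^{t+h}(\cdots)\,\d s\to(\cdots)(t,\hat x_i,y)$, both resting on admissibility conditions (iv)--(v) and on (standard, if unstated) local-boundedness/growth hypotheses on $W_i,g_i$ and the coefficients $a,b,m,v$; and (b) the evaluation of $h^{-1}\var_t[g_i(t+h,\cdot)]$, where one must check that the drift and drift--diffusion cross terms contribute only $o(h)$, so that only the martingale part of the increment of $g_i$ survives in the limit. I expect (b), and more broadly the precise reassembly of the extended-HJB operator in Step 3, to be where the bookkeeping is most error-prone.
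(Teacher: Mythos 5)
Your proposal is correct and is essentially the proof the paper relies on: the paper in fact states this verification theorem without proof, deferring to the Bj\"ork--Khapko--Murgoci (2017) framework, and your three steps --- the martingale characterization $g_i(t,\hat x_i,y)=\mathbb E_t[\hat X_i^{\Pi_i,\Pi_j}(T)]$ from \eqref{hjbg}, the Feynman--Kac/Dynkin representation showing $W_i=J_i(\cdot;\Pi_i,\Pi_j)$, and the $h\to 0^+$ perturbation in which the law of total variance and the expansion $h^{-1}\var_t[g_i(t+h,\cdot)]\to\mathcal L^{v_i}g_i^2-2g_i\mathcal L^{v_i}g_i$ reassemble the extended-HJB operator --- are precisely the standard route adapted to the exploratory dynamics here. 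The analytic caveats you flag (local-to-true martingale upgrades and the $o(h)$ bookkeeping) are exactly the points that the admissibility conditions (iv)--(v) are designed to cover, so nothing further is missing.
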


\subsection{Solution to the general case}
While the preservation of the uniqueness of equilibrium response remains uncertain, the aforementioned theorem provides a constructive framework for identifying a specific Nash equilibrium.  We first concentrate on the equilibrium response of Agent $1$, and analogous results for Agent $2$ can be obtained using the same method.  It is assumed that the equilibrium value function $V_1$ and the function $g_1$  can be precisely expressed as 
  \begin{align}
	V_1(t,x,y)=x+D_1(t,y),~~~g_1(t,x,y)=x+d_1(t,y).\label{g1}
\end{align}
It is obvious that $D_1(T,y)=d_1(T,y)=0$.  
Using \eqref{eq:oper} and substituting \eqref{g1} into \eqref{hjbw}, we streamline \eqref{hjbw} to \begin{align}\label{simplifyhjb}
\begin{split}
	\sup\limits_{\pi_1\in \mathcal M}&\left\{ \dfrac{\partial D_1(t,y)}{\partial t}+b(t,y)\theta(t,y)(\mu_1(t)-k_1\mu_2(t))+m(t,y)\dfrac{\partial D_1(t,y)}{\partial y}\right.\\
	&-\dfrac{\gamma_1}{2}b^2(t,y)[(\mu_1^2(t)+\sigma_1^2(t))+k_1^2(\mu_2^2(t)+\sigma_2^2(t))-2k_1\mu_1(t)\mu_2(t)]\\
	&-\dfrac{\gamma_1}{2}v^2(t,y)\left(\dfrac{\partial d_1(t,y)}{\partial y} \right)^2+\dfrac{1}{2}v^2(t,y)\dfrac{\partial ^2 D_1(t,y)}{\partial y^2}\\
	&\left.-\gamma_1\rho v(t,y)b(t,y)(\mu_1(t)-k_1\mu_2(t))\dfrac{\partial d_1(t,y)}{\partial y}+\lambda_1(t)\Phi_{h_1}(\pi_1)\right\}=0.
\end{split}
\end{align}
We can see from \eqref{simplifyhjb} that the supremum only depends on the mean and variance of $\pi_1$ except $\Phi_{h_1}(\pi_1)$. 
 We proceed with our analysis relying on the crucial Lemma \ref{lem:3.2}.
  \begin{lemma}[Theorem 3.1 of \cite{LCLW20}]\label{lem:3.2}
If $h$ is continuous and not constantly zero, then
a maximizer $\Pi^*$ to the optimization problem \begin{align}\label{eq:opt}
\max_{\Pi \in \mathcal M^2}\Phi_h (\Pi) \mbox{~~~~~subject to~} \mu (\Pi) =m ~\mbox{and}~ \sigma^2 (\Pi) = s^2
\end{align}  has the following quantile function
\begin{equation}\label{eq:lemma3}
Q_{\Pi^*}(p) =  m + s\frac{ h'(1-p) }{ ||h'||_2}, ~~ \mbox{~a.e. }p\in (0,1),
\end{equation}
and the  maximum value of \eqref{eq:opt} is $\Phi_h(\Pi^*)= s||h'||_2$. 
\end{lemma}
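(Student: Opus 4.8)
The plan is to pass to the quantile representation \eqref{eq:quan_rep} of $\Phi_h$, turn the two moment constraints into statements about the quantile function, and then read off the answer from the Cauchy--Schwarz inequality. Since $h$ is concave, continuous on $[0,1]$ and $h(0)=h(1)=0$, it is absolutely continuous with a non-increasing derivative $h'$; ``not constantly zero'' together with $h\ge 0$ forces $h$ to be non-constant, so $h'$ is not a.e.\ zero and $\|h'\|_2>0$ (we work under the implicit hypothesis $\|h'\|_2<\infty$). Using \eqref{eq:quan_rep} and the substitution $p\mapsto 1-p$ one rewrites, for every $\Pi\in\mathcal M^2$,
\begin{equation*}
\Phi_h(\Pi)=\int_0^1 Q_\Pi(1-p)\,\d h(p)=\int_0^1 Q_\Pi(p)\,h'(1-p)\,\d p,
\end{equation*}
while the constraints read $\mu(\Pi)=\int_0^1 Q_\Pi(p)\,\d p=m$ and $\sigma^2(\Pi)=\int_0^1 Q_\Pi(p)^2\,\d p-m^2=s^2$.

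Next I would produce the upper bound. Since $\int_0^1 h'(1-p)\,\d p=h(1)-h(0)=0$, for any feasible $\Pi$,
\begin{equation*}
\Phi_h(\Pi)=\int_0^1\bigl(Q_\Pi(p)-m\bigr)\,h'(1-p)\,\d p\le\bigl\|Q_\Pi-m\bigr\|_2\,\bigl\|h'(1-\cdot)\bigr\|_2=s\,\|h'\|_2,
\end{equation*}
where the inequality is Cauchy--Schwarz and $\|Q_\Pi-m\|_2^2=\int_0^1 Q_\Pi^2-2m\int_0^1 Q_\Pi+m^2=s^2$, $\|h'(1-\cdot)\|_2=\|h'\|_2$. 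Equality holds if and only if $Q_\Pi(p)-m$ is a nonnegative scalar multiple of $h'(1-p)$, and matching the variance constraint pins the multiplier down to $s/\|h'\|_2$, giving the candidate quantile function $Q^*(p)=m+s\,h'(1-p)/\|h'\|_2$, which is precisely \eqref{eq:lemma3}.

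Finally I would verify that this candidate is genuinely feasible: because $h$ is concave, $h'$ is non-increasing, hence $p\mapsto h'(1-p)$ is non-decreasing, so $Q^*$ is a bona fide left-quantile function; taking $\Pi^*$ to be the law of $Q^*(U)$ with $U$ uniform on $(0,1)$ yields $\Pi^*\in\mathcal M^2$ with $\mu(\Pi^*)=m$ and $\sigma^2(\Pi^*)=s^2$ and $Q_{\Pi^*}=Q^*$ a.e. Since $\Pi^*$ attains the bound that holds for every feasible $\Pi$, it is a maximizer of \eqref{eq:opt}, and its value is $\Phi_h(\Pi^*)=\int_0^1 Q^*(p)h'(1-p)\,\d p=s\|h'\|_2$ by the equality case. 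The main technical obstacle is the regularity of $h$: one must justify $\d h(p)=h'(p)\,\d p$ and the finiteness/identification of $\|h'\|_2$, and for merely continuous (not a priori differentiable) concave $h$ one should first prove the statement for smooth concave $h$ and then remove the extra regularity by approximation, using that the constraint set and the map $\Pi\mapsto(\mu(\Pi),\sigma^2(\Pi))$ are unaffected; a minor point is that $Q^*$ determines $\Pi^*$ only up to the usual quantile conventions, which does not affect $\Phi_h$, $\mu$ or $\sigma^2$.
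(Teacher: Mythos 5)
The paper does not prove this lemma at all: it is imported verbatim as Theorem 3.1 of \cite{LCLW20}, so there is no in-paper argument to compare against. Your proof is correct and is in fact the canonical route (the same one used in the cited reference and in \cite{HWZ23}): pass to the quantile representation \eqref{eq:quan_rep}, use that a continuous concave $h$ with $h(0)=h(1)=0$ is absolutely continuous with nonincreasing derivative, center $Q_\Pi$ by $m$ using $\int_0^1 h'(1-p)\,\d p=0$, apply Cauchy--Schwarz, and check that the equality-case quantile function is nondecreasing (hence realizable) and meets both moment constraints. You also correctly identify the one hypothesis the statement leaves implicit, namely $\|h'\|_2<\infty$ (if $h'\notin L^2$ the supremum is $+\infty$ and no maximizer exists); your worry about needing a smoothing/approximation step for merely continuous concave $h$ is unnecessary, since such $h$ is already absolutely continuous on $[0,1]$ and the direct argument goes through.
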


 Define the expression enclosed in brackets  of \eqref{simplifyhjb}  as  $\varphi_1(\pi_1)$. We have the following proposition.

\begin{proposition}\label{prop:3.3}  Let   $h_1\in \mathcal H$ be a continuous function, and  let   $\Pi_1\in\mathcal A(\Pi_2)$ be an admissible strategy.  
 For any  strategy  $\Pi_1=\{\Pi_1(t)\}_{t\ge 0}\in \mathcal A(\Pi_2)$ with mean process $\{\mu_1(t)\}_{t\ge 0}$ and variance process $\{\sigma_1(t)^2\}_{t\ge 0}$, there exists a strategy   $\Pi^*_1=\{\Pi^*_1(t)\}_{t\ge 0}\in \mathcal A(\Pi_2)$ defined  by
 \begin{equation*}
Q_{\Pi^*_1(t)}(p) = \mu_1(t) + \sigma_1(t)\frac{ h_1'(1-p) }{ ||h_1'||_2}, ~~ ~~\mbox{~a.e.}~p\in (0,1),\;\;t\geq0,
\end{equation*}
 which shares the same mean  and variance processes  as $\Pi_1$ and satisfies 
 $\varphi_1(\pi_1) \leqslant \varphi_1(\pi^*_1)$.

\end{proposition}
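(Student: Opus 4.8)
The plan is to isolate the only place where $\varphi_1(\pi_1)$ genuinely depends on the \emph{shape} of $\pi_1$, and then apply Lemma~\ref{lem:3.2} slice by slice in time. Reading off the bracketed expression in \eqref{simplifyhjb}, every term that contains $\pi_1$ does so either through the mean $\mu_1(t)=\mu(\pi_1)$ (the drift term $b\theta(\mu_1-k_1\mu_2)$ and the cross term $-\gamma_1\rho v b(\mu_1-k_1\mu_2)\,\partial_y d_1$) or through $\mu_1(t)$ and the variance $\sigma_1^2(t)=\sigma^2(\pi_1)$ (the second‑moment penalty $-\tfrac{\gamma_1}{2}b^2[(\mu_1^2+\sigma_1^2)+k_1^2(\mu_2^2+\sigma_2^2)-2k_1\mu_1\mu_2]$), the sole exception being the regularizer $\lambda_1(t)\Phi_{h_1}(\pi_1)$. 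Hence I would write
\[
\varphi_1(\pi_1)=\Lambda_1\bigl(t,y,\mu_1(t),\sigma_1^2(t)\bigr)+\lambda_1(t)\,\Phi_{h_1}(\pi_1),
\]
where $\Lambda_1$ gathers all the terms that are functions of $(\mu_1(t),\sigma_1^2(t))$ only, together with the $\pi_1$‑free terms ($\partial_t D_1$, $m\,\partial_y D_1$, $-\tfrac{\gamma_1}{2}v^2(\partial_y d_1)^2$, $\tfrac12 v^2\partial^2_{yy}D_1$). Since $\lambda_1(t)\ge 0$, for prescribed mean $m$ and variance $s^2$ the distribution maximizing $\varphi_1$ is exactly the distribution maximizing $\Phi_{h_1}$ subject to $\mu(\Pi)=m$, $\sigma^2(\Pi)=s^2$, i.e.\ problem \eqref{eq:opt}.

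Next I would apply Lemma~\ref{lem:3.2} pointwise in $t$, with $h=h_1$, $m=\mu_1(t)$, $s=\sigma_1(t)$ (the case $h_1\equiv 0$ being trivial, as then $\Phi_{h_1}\equiv0$ and the construction degenerates; so one may assume $h_1$ not constantly zero, hence $\|h_1'\|_2>0$). Since $\Pi_1\in\mathcal A(\Pi_2)$ forces $\Pi_1(t)\in\mathcal M^2$ with mean $\mu_1(t)$ and variance $\sigma_1^2(t)$ for a.e.\ $t$, the lemma tells us that the distribution $\Pi_1^*(t)$ with quantile function $Q_{\Pi_1^*(t)}(p)=\mu_1(t)+\sigma_1(t)\,h_1'(1-p)/\|h_1'\|_2$ is a maximizer of \eqref{eq:opt} and attains the value $\Phi_{h_1}(\Pi_1^*(t))=\sigma_1(t)\|h_1'\|_2$; in particular $\Pi_1^*(t)$ has the same mean and variance as $\Pi_1(t)$, and $\Phi_{h_1}(\Pi_1(t))\le\Phi_{h_1}(\Pi_1^*(t))$. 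Plugging this into the displayed decomposition—the $\Lambda_1$ part being unchanged because $\mu_1(t),\sigma_1^2(t)$ are unchanged, and using $\lambda_1(t)\ge 0$—yields $\varphi_1(\pi_1)\le\varphi_1(\pi_1^*)$ directly.

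The remaining work, and what I expect to be the delicate part, is verifying $\Pi_1^*\in\mathcal A(\Pi_2)$. The key point is that the controlled dynamics \eqref{dynamic} and the generator \eqref{eq:oper} see Agent~1's policy only through the deterministic feedback functions $\mu_1(t,x,y)=\mu(\pi_1(t,x,y))$ and $\sigma_1^2(t,x,y)=\sigma^2(\pi_1(t,x,y))$; since $\Pi_1^*$ is built from exactly these two functions (and the fixed $h_1$), the process $\hat X_1^{\Pi_1^*,\Pi_2}$ solves the same stochastic differential equation as $\hat X_1^{\Pi_1,\Pi_2}$ and therefore (pathwise, under the standing well‑posedness assumptions, or at least in law) coincides with it—so the mean and variance processes are literally unchanged, as the statement asserts, and $\pi_1^*$ is a bona fide measurable feedback map, giving (i)–(iii). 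Condition (iv) is inherited from $\Pi_1$ verbatim since it only involves $|b\theta(\mu_1-k_1\mu_2)|$ and $b^2(\mu_1^2+\sigma_1^2)$, and condition (v) follows from the closed form $\Phi_{h_1}(\Pi_1^*(s))=\sigma_1(s)\|h_1'\|_2$ together with the moment control in (iv) and the boundedness hypotheses on the model coefficients. Thus the only genuine obstacle is this admissibility bookkeeping—above all, justifying that passing from $\Pi_1$ to $\Pi_1^*$ does not alter the state process (hence the moment processes) rather than merely its law, and checking integrability of $\lambda_1\Phi_{h_1}(\Pi_1^*)$; the inequality $\varphi_1(\pi_1)\le\varphi_1(\pi_1^*)$ itself is an immediate corollary of Lemma~\ref{lem:3.2}.
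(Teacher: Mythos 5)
Your proposal is correct and follows essentially the same route as the paper: observe that $\varphi_1$ depends on $\pi_1$ only through its mean, its variance, and the term $\lambda_1(t)\Phi_{h_1}(\pi_1)$, then fix the mean and variance and invoke Lemma~\ref{lem:3.2} to maximize the regularizer. The only difference is that you also carefully address the admissibility of $\Pi_1^*$ and the integrability condition (v), points the paper's proof passes over in silence; this is additional rigor rather than a different argument.
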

\begin{proof}
Observe  that  $\varphi_1(\Pi_1)$  only  depends on $\Pi_1$  through $\mu_1(\pi_1)$ and $\sigma^2_1(\pi_1)$, aside from   $\Phi_{h_1}(\pi_1)$. Thus,  we have \begin{align*} 
	\max\limits_{\pi_1\in \mathcal{M}(\mathbb R)}\varphi_1(\pi_1)=\max\limits_{m_1\in \mathbb R,s_1>0}\max\limits_{\substack{\pi_1\in \mathcal{M}(R)\\ \mu_1(\pi_1)=m,\sigma_1(\pi_1)^2=s^2}}\varphi_1(\pi_1),
\end{align*} and the inner maximization problem is equivalent to
\begin{align}\label{mcr}
	\max\limits_{\pi_1\in \mathcal{M}(R)}\Phi_{h_1}(\pi_1)\ \ \text{subject to }\mu_1(\pi_1)=m,\ \sigma_1(\pi_1)^2=s^2.
\end{align}
By Lemma \ref{lem:3.2}, the maximizer $\pi_1^*$ of \eqref{mcr} whose quantile function is $Q_{\pi_1^*}(p)$ satisfies
$$Q_{\pi_1^*}(p)=\mu_1(t)+\sigma_1(t)\dfrac{h_1'(1-p)}{\Vert h_1'\Vert_2}$$
and
	$\Phi_{h_1}(\pi_1^*)=\sigma_1(t)\Vert h_1'\Vert_2.$  This completes the proof. \qed
\end{proof}
Let $\mu_1^*(t)$ and $\sigma_1^*(t)$ represent  the mean and standard deviation of equilibrium response of Agent $1$.  Building upon Proposition \ref{prop:3.3}, we obtain
\begin{align*}
	(\mu_1^*(t),\sigma_1^*(t))=&\argmax\limits_{m\in \mathbb R,s>0}\left\{b(t,y)\theta(t,y)(m-k_1\mu_2(t))\right.\\
	&-\dfrac{\gamma_1}{2}b^2(t,y)[(m^2+s^2)+k_1^2(\mu_2^2(t)+\sigma_2^2(t))-2k_1m\mu_2(t)]\\
	&\left.-\gamma_1\rho v(t,y)b(t,y)(m-k_1\mu_2(t))\dfrac{\partial d_1(t,y)}{\partial y}+\lambda_1(t)s\Vert h_1'\Vert_2\right\}.
\end{align*}
By the first-order condition, we deduce that
\begin{align*}
	b(t,y)\theta(t,y)-\gamma_1b^2(t,y)\mu_1^*(t)+k_1\gamma_1\mu_2(t)b^2(t,y)-\gamma_1\rho v(t,y)b(t,y)\dfrac{\partial d_1(t,y)}{\partial y}=0,
\end{align*}
and
\begin{align*}
	-\gamma_1b^2(t,y)\sigma_1^*(t)+\lambda_1(t)\Vert h_1'\Vert_2=0.
\end{align*}
So the mean and standard deviation of equilibrium response of Agent $1$ are
\begin{align}
		\mu_1^*(t)&=\dfrac{\theta(t,y)}{\gamma_1b(t,y)}+k_1\mu_2(t)-\dfrac{\rho v(t,y)}{b(t,y)}\dfrac{\partial d_1(t,y)}{\partial y},\label{mu}\end{align}
and 
\begin{align}
		\sigma_1^*(t)&=\dfrac{\lambda_1(t)\Vert h_1'\Vert_2}{\gamma_1b^2(t,y)}.\label{sigma}
\end{align}
By substituting \eqref{mu} and \eqref{sigma} back  into \eqref{hjbg} and\eqref{simplifyhjb}, we then get that $d_1(t,y)$ and $D_1(t,y)$ respectively satisfy 
\begin{align*}
	\dfrac{\partial d_1(t,y)}{\partial t}+(m(t,y)-\rho v(t,y)\theta(t,y))\dfrac{\partial d_1(t,y)}{\partial y}+\dfrac{1}{2}v^2(t,y)\dfrac{\partial^2 d_1(t,y)}{\partial y^2}+\dfrac{\theta^2(t,y)}{\gamma_1}=0,
\end{align*}
and
\begin{align*}
	&\dfrac{\partial D_1(t,y)}{\partial t}+m(t,y)\dfrac{\partial D_1(t,y)}{\partial y}+\dfrac{1}{2}v^2(t,y)\dfrac{\partial^2(D_1-\gamma_1d_1)}{\partial y^2}(t,y)\\
	&-\dfrac{\gamma_1}{2}\rho^2v^2(t,y)\left(\dfrac{\partial d_1(t,y)}{\partial t}-1\right)^2+\dfrac{\gamma_1}{2}\rho^2v^2(t,y)-\dfrac{\theta^2(t,y)}{2\gamma_1}\\
	&-\dfrac{\gamma_1k_1}{2}b^2(t,y)\sigma_2^2(t)-\rho v(t,y)\theta(t,y)+\dfrac{\lambda_1^2(t)\Vert h_1'\Vert_2^2}{2\gamma_1b^2(t,y)}=0.
\end{align*}
Repeating the procedure outlined above can yield  results for Agent $2$. So if $(\Pi_1^*,\Pi_2^*)$ constitutes a Nash equilibrium, we have
\begin{align}\label{twomean}
	\left\{
	\begin{aligned}
		\mu_1^*(t)-k_1\mu_2^*(t)&=\dfrac{\theta(t,y)}{\gamma_1b(t,y)}-\dfrac{\rho v(t,y)}{b(t,y)}\dfrac{\partial d_1(t,y)}{\partial y},\\
		\mu_2^*(t)-k_2\mu_1^*(t)&=\dfrac{\theta(t,y)}{\gamma_2b(t,y)}-\dfrac{\rho v(t,y)}{b(t,y)}\dfrac{\partial d_2(t,y)}{\partial y}.
	\end{aligned}
	\right.
\end{align}
The solution to the system of equations can be directly calculated as
\begin{align}\label{equimean}
	\left\{
	\begin{aligned}
		\mu_1^*(t)&=\dfrac{1}{1-k_1k_2}\left[\dfrac{\theta(t,y)}{b(t,y)}\left(\dfrac{1}{\gamma_1}+\dfrac{k_1}{\gamma_2}\right)-\dfrac{\rho v(t,y)}{b(t,y)}\left(\dfrac{\partial d_1(t,y)}{\partial y}+\dfrac{k_1\partial d_2(t,y)}{\partial y}\right)\right],\\
		\mu_2^*(t)&=\dfrac{1}{1-k_1k_2}\left[\dfrac{\theta(t,y)}{b(t,y)}\left(\dfrac{1}{\gamma_2}+\dfrac{k_2}{\gamma_1}\right)-\dfrac{\rho v(t,y)}{b(t,y)}\left(\dfrac{\partial d_2(t,y)}{\partial y}+\dfrac{k_2\partial d_1(t,y)}{\partial y}\right)\right].
	\end{aligned}
	\right.
\end{align}
Summarizing the above, we get following theorem.
\begin{theorem}\label{Nashequilibrium}
	For $i\in\{1,2\}$ and $i\neq j$, let $d_i(t,y)$ and $D_i(t,y)$ be the solutions of
	\begin{align}\label{di}
		\dfrac{\partial d_i(t,y)}{\partial t}+(m(t,y)-\rho v(t,y)\theta(t,y))\dfrac{\partial d_i(t,y)}{\partial y}+\dfrac{1}{2}v^2(t,y)\dfrac{\partial^2 d_i}{\partial y^2}+\dfrac{\theta^2(t,y)}{\gamma_i}=0,
	\end{align}
	and
	\begin{align}\label{Di}
		\begin{split}
			&\dfrac{\partial D_i(t,y)}{\partial t}+m(t,y)\dfrac{\partial D_i(t,y)}{\partial y}+\dfrac{1}{2}v^2(t,y)\dfrac{\partial^2(D_i-\gamma_id_i)}{\partial y^2}(t,y)\\
	        &-\dfrac{\gamma_i}{2}\rho^2v^2(t,y)\left(\dfrac{\partial d_i(t,y)}{\partial t}-1\right)^2+\dfrac{\gamma_i}{2}\rho^2v^2(t,y)-\dfrac{\theta^2(t,y)}{2\gamma_i}\\
	        &-\dfrac{\gamma_ik_i}{2}b^2(t,y)\sigma_j^2(t)-\rho v(t,y)\theta(t,y)+\dfrac{\lambda_i^2(t)\Vert h_i'\Vert_2^2}{2\gamma_ib^2(t,y)}=0,
		\end{split}
	\end{align}
	with terminal condition $D_i(T,y)=d_i(T,y)=0$. Then $(\Pi_1^*,\Pi_2^*)$ with quantile functions
	\begin{align}\label{quantile}
	\begin{split}
		Q_{\Pi_i^*(t)}(p)=&\dfrac{1}{1-k_1k_2}\left[\dfrac{\theta(t,y)}{b(t,y)}\left(\dfrac{1}{\gamma_i}+\dfrac{k_i}{\gamma_j}\right)-\dfrac{\rho v(t,y)}{b(t,y)}\left(\dfrac{\partial d_i(t,y)}{\partial y}+\dfrac{k_i\partial d_j(t,y)}{\partial y}\right)\right]\\
		&+\dfrac{\lambda_i(t)}{\gamma_ib^2(t,y)}h_i'(1-p)
	\end{split}
	\end{align}
	is a Nash equilibrium with $p\in (0,1)$, and $V_i(t,x,y)=x+D_i(t,y)$ is the equilibrium value function of Agent $i$.
\end{theorem}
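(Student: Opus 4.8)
The plan is to invoke the verification theorem separately for each agent $i$, with the opponent's strategy frozen at the candidate $\Pi_j^*$, and then to check that the two candidate responses are mutually consistent, so that the pair $(\Pi_1^*,\Pi_2^*)$ is a genuine fixed point in the sense of the definition of a time-consistent Nash equilibrium. Concretely, I would start from the ansatz $W_i(t,x,y)=x+D_i(t,y)$ and $g_i(t,x,y)=x+d_i(t,y)$, which automatically satisfies the terminal conditions in \eqref{terminalcondition} because $D_i(T,\cdot)=d_i(T,\cdot)=0$. Substituting into the generator \eqref{eq:oper} and using $\partial_x W_i=\partial_x g_i=1$, $\partial_{xx}W_i=\partial_{xx}g_i=0$, the extended HJB system \eqref{hjbw}--\eqref{hjbg} collapses to the reduced problem \eqref{simplifyhjb} for $D_1$ (and its analogue for $D_2$), together with $\mathcal L^{\pi_i,\Pi_j^*}g_i=0$.

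Next I would solve the pointwise-in-$(t,y)$ optimization inside \eqref{simplifyhjb}. As already observed, the integrand depends on $\pi_i$ only through $\mu_i$, $\sigma_i^2$ and $\Phi_{h_i}(\pi_i)$; applying Proposition \ref{prop:3.3} reduces the maximization to choosing $(\mu_i,\sigma_i)\in\R\times\R_+$ in a smooth strictly concave (quadratic-plus-linear) expression, whose first-order conditions give \eqref{mu} and \eqref{sigma}, with the opponent's moments $\mu_j^*$, $\sigma_j^*$ appearing as parameters. The maximizing distribution then has the quantile shape $Q_{\Pi_i^*(t)}(p)=\mu_i^*(t)+\sigma_i^*(t)h_i'(1-p)/\|h_i'\|_2$, and substituting \eqref{sigma} shows the last term of \eqref{quantile} equals $\lambda_i(t)h_i'(1-p)/(\gamma_ib^2(t,y))$. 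Imposing the same construction for both agents yields the linear $2\times2$ system \eqref{twomean} for $\mu_1^*-k_1\mu_2^*$ and $\mu_2^*-k_2\mu_1^*$; since $1-k_1k_2>0$ (because $k_i\in(0,1)$), this system is invertible and its solution \eqref{equimean} supplies the first bracket of \eqref{quantile}. Finally, plugging \eqref{mu}--\eqref{sigma} back into $\mathcal L^{\pi_i,\Pi_j^*}g_i=0$ and into \eqref{simplifyhjb} produces exactly the linear PDEs \eqref{di} and \eqref{Di}; by hypothesis these are solved by $d_i$, $D_i$, which closes the loop and identifies $g_i(t,x,y)=\E_t[\hat X_i^{\Pi_i^*,\Pi_j^*}(T)]=x+d_i(t,y)$ and the equilibrium value function $V_i(t,x,y)=x+D_i(t,y)$.

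It remains to discharge the regularity and admissibility hypotheses of the verification theorem: $W_i,g_i\in C^{1,2,2}(\mathcal D)$, inherited from the smoothness of the PDE coefficients and of $d_i,D_i$; the candidate $\Pi_i^*$ is an admissible feedback control in $\mathcal A(\Pi_j^*)$, i.e. the map $(t,\hat x_i,y)\mapsto\Pi_i^*(t)$ is deterministic and Borel, and the integrability conditions (iv)--(v) hold, which follows since $\mu_i^*,\sigma_i^*$ are deterministic continuous functions of $(t,y)$ and $\Phi_{h_i}(\Pi_i^*(t))=\sigma_i^*(t)\|h_i'\|_2$ by Lemma \ref{lem:3.2}; and $Q_{\Pi_i^*(t)}$ is genuinely a quantile function, which holds because concavity of $h_i$ makes $h_i'$ non-increasing, so $p\mapsto h_i'(1-p)$ is non-decreasing. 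Once these checks are in place, the verification theorem gives that $\Pi_i^*$ is the equilibrium response to $\Pi_j^*$ for each $i$, i.e. $\Pi_1^*=\pi_1(\Pi_2^*)$ and $\Pi_2^*=\pi_2(\Pi_1^*)$.

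I expect the main obstacle to be the \emph{self-consistency} step: the optimization defining agent $i$'s response uses $\mu_j^*,\sigma_j^*$ and $d_j$, which are themselves outputs of agent $j$'s problem, so one must argue that the coupled system \eqref{twomean} together with the PDEs \eqref{di} and \eqref{Di} admits a simultaneous solution. The key simplification that makes this tractable is that each $d_i$ solves an autonomous linear PDE not involving the opponent, so only the finite-dimensional linear algebra in \eqref{twomean} requires the invertibility $1-k_1k_2\neq0$; after that, each $D_i$ is obtained by solving one further linear parabolic PDE with a known source term, and no genuine fixed-point iteration is needed.
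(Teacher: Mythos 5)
Your proposal is correct and follows essentially the same route as the paper: the affine ansatz $W_i=x+D_i$, $g_i=x+d_i$, reduction of the extended HJB to \eqref{simplifyhjb}, the mean--variance reduction via Proposition \ref{prop:3.3} and Lemma \ref{lem:3.2}, the first-order conditions \eqref{mu}--\eqref{sigma}, inversion of the linear system \eqref{twomean} using $1-k_1k_2\neq 0$, and back-substitution to obtain \eqref{di} and \eqref{Di}. Your additional remarks on admissibility, the monotonicity of $p\mapsto h_i'(1-p)$, and the decoupling that avoids any genuine fixed-point argument are sound and, if anything, slightly more explicit than the paper's own derivation.
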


From \eqref{quantile}, it is evident   that the equilibrium distribution of  Agent $i$ is uniquely determined by his own  Choquet regularizer, $h'_i$,  and remains independent of his opponent's regularizer, $h_j$. Furthermore, \eqref{sigma} and \eqref{equimean} show that while  the mean of Agent $i$'s distribution  depends on  both his own parameters and those of his opponent,  the variance is solely determined by his own parameters, specifically $\lambda_i$, $h_i$, and $\gamma_i$. These  insights   align with intuitive expectations in the context of  RL. Although an opponent's risk tolerance, sensitivity, or strategic decisions can influence the expected outcomes of the decision-making process, the degree of exploration, as reflected by variance, is solely a function of the agent's intrinsic characteristics.  
Additionally,   \eqref{sigma} shows that  larger values of $\lambda_i$ or $h_i$ indicate a stronger emphasis on exploration,  leading to more dispersed exploration around the current position of Agent $i$. In contrast, an increase in the risk aversion parameter  $\gamma_i$
  reflects a more cautious approach, leading to reduced variance in the exploratory strategy.

\subsection{Solution to Gauss mean return model}
In this subsection, we examine the Gaussian mean return model as a special case of the state process 
$Y(t)$ shown in Example \ref{exa:1}, that is,
\begin{align*}
	a(t,y)=r+\sigma y,\ b(t,y)=\sigma,\ m(t,y)=\iota(Y-y),\ v(t,y)=v,
\end{align*}
where $r$, $\sigma$, $\iota$, $v$ and $Y$ are positive constants. Thus, by \eqref{eq:theta}, we have $\theta(t,y)=y$. 
We formulate the following proposition as a direct consequence of Theorem \ref{Nashequilibrium}.

\begin{proposition}\label{prop:3.5}
For the Gauss mean return model, $p\in (0,1)$, $i,j\in\{1,2\}$ and $i\neq j$, profile $(\Pi_1^*,\Pi_2^*)$ with quantile functions
	\begin{align}\label{quantilegauss}
		\begin{split}
			Q_{\Pi_i^*(t)}(p)=&\dfrac{1}{1-k_1k_2}\left[\dfrac{y}{\sigma}\left(\dfrac{1}{\gamma_i}+\dfrac{k_i}{\gamma_j}\right)-\dfrac{\rho v}{\sigma}\left((a_2^i(t)+k_ia_2^j(t))y+(a_1^i(t)+k_ia_1^j(t))\right)\right]\\
			&+\dfrac{\lambda_i(t)}{\gamma_i\sigma^2}h_i'(1-p),
		\end{split}
	\end{align}
	is a Nash equilibrium.
	Moreover, the corresponding equilibrium value function $V_i$ and $g_i$ have the following form
	\begin{equation}\label{vgauss}
			V_i(t,\hat x_i,y)=\hat x_i+\dfrac{1}{2}b_2^i(t)y^2+b_1^i(t)y+b_0^i(t),
	\end{equation}
	and 
	 \begin{equation} \label{ggauss} g_i(t,\hat x_i,y)=\hat x_i+\dfrac{1}{2}a_2^i(t)y^2+a_1^i(t)y+a_0^i(t), 
	\end{equation} where $a^i_n(t)$, $b^i_n(t)$,~$n=0,1,2$, are continuously differentiable functions    defined as 
	\begin{align}\label{ai}
	\left\{
	\begin{aligned}
	a_0^i(t)&=\dfrac{\iota^2 Y^2}{\gamma_i(\iota +\rho v)^2}\left(T-t-\frac{1}{2(\iota+\rho v)}e^{2(\iota+\rho v)T})+\frac{2}{(\iota+\rho v)}e^{(\iota+\rho v)T}\right)\\&~~~~+\dfrac{v^2}{2\gamma_i(\iota +\rho v)}\left(T-t+\frac{1}{2(\iota+\rho v)}e^{2(\iota+\rho v)T}\right), \\
	a_1^i(t)&=\dfrac{\iota Y}{\gamma_i(\iota +\rho v)^2}[1-e^{-(\iota+\rho v)(T-t)}]^2,\\
		a_2^i(t)&=\dfrac{1}{\gamma_i(\iota +\rho v)}[1-e^{-2(\iota+\rho v)(T-t)}],
	\end{aligned}
	\right.	
	\end{align} and 
	\begin{align}\label{dai}
		\left\{
		\begin{aligned}
			{b_2^i}'(t)=&2\iota b_2^i(t)+\dfrac{\gamma_i\rho^2v^2}{2}a_2^i(t)+\dfrac{1}{2\gamma_i},\\
			{b_1^i}'(t)=&\iota b_1^i(t)-\iota Yb_2^i(t)+\gamma_i\rho^2v^2 a_2^i(t)(a_1^i(t)-1)+\rho v,\\
			{b_0^i}'(t)=&-\iota Yb_1^i(t)-\dfrac{v^2}{2}(b_2^i(t)-\gamma_i a_2^i(t))+\dfrac{\gamma_i\rho^2v^2}{2}(a_1^i(t)^2-2a_1^i(t))\\
			&+\dfrac{\gamma_ik_i\sigma^2}{2}\sigma_j(t)^2-\dfrac{\lambda_i^2(t)\Vert h_i'\Vert_2^2}{2\gamma_i\sigma^2},
		\end{aligned}
		\right.
	\end{align} \end{proposition}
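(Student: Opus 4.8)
The plan is to specialize Theorem~\ref{Nashequilibrium} to the Gaussian mean return model, in which $\theta(t,y)=y$, $b(t,y)=\sigma$, $m(t,y)=\iota(Y-y)$ and $v(t,y)=v$, and then to solve the two PDEs \eqref{di} and \eqref{Di} explicitly by a polynomial ansatz. The crucial structural remark is that, in this model, the coefficients multiplying $\partial_y$ and $\partial_{yy}$ are affine in $y$ while the source terms are polynomials of degree at most two in $y$; consequently both PDEs preserve the class of functions that are quadratic polynomials in $y$ with time-dependent coefficients. This is exactly what makes the ansätze $g_i(t,\hat x_i,y)=\hat x_i+\tfrac12 a_2^i(t)y^2+a_1^i(t)y+a_0^i(t)$ and $V_i(t,\hat x_i,y)=\hat x_i+\tfrac12 b_2^i(t)y^2+b_1^i(t)y+b_0^i(t)$ self-consistent, in agreement with the separable forms $g_i=x+d_i$ and $V_i=x+D_i$ already isolated in Section~\ref{sec:3}.

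First I would substitute $d_i(t,y)=\tfrac12 a_2^i(t)y^2+a_1^i(t)y+a_0^i(t)$ into \eqref{di} and match the coefficients of $y^2$, $y^1$, $y^0$. This produces a triangular system of linear ODEs with terminal data $a_2^i(T)=a_1^i(T)=a_0^i(T)=0$: the $y^2$-equation is an autonomous scalar linear ODE for $a_2^i$ with constant coefficient $2(\iota+\rho v)$, whose solution is $a_2^i(t)=\tfrac{1}{\gamma_i(\iota+\rho v)}\big(1-e^{-2(\iota+\rho v)(T-t)}\big)$; feeding this into the $y^1$-equation gives an inhomogeneous linear ODE for $a_1^i$ with solution $a_1^i(t)=\tfrac{\iota Y}{\gamma_i(\iota+\rho v)^2}\big(1-e^{-(\iota+\rho v)(T-t)}\big)^2$; finally $a_0^i$ is obtained by direct integration of $-\iota Y a_1^i-\tfrac12 v^2 a_2^i$ against the terminal condition, which yields \eqref{ai}. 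I would then read off $\partial_y d_i(t,y)=a_2^i(t)y+a_1^i(t)$ and $\partial_y d_j(t,y)=a_2^j(t)y+a_1^j(t)$, substitute them into the general equilibrium quantile formula \eqref{quantile} together with $\theta=y$, $b=\sigma$, $v(t,y)=v$, and regroup the $y$-linear and constant parts to arrive at \eqref{quantilegauss}.

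Next I would repeat the procedure for \eqref{Di}: insert $D_i(t,y)=\tfrac12 b_2^i(t)y^2+b_1^i(t)y+b_0^i(t)$ together with the now-known $d_i$, using $\sigma_j^2(t)=\big(\lambda_j(t)\|h_j'\|_2/(\gamma_j\sigma^2)\big)^2$ from \eqref{sigma} for the coupling term, and match powers of $y$. The resulting coefficients of $y^2$, $y^1$ and $y^0$ give, respectively, the three linear ODEs in \eqref{dai} (the first two driven by $a_2^i$ and $a_1^i$, the last carrying the $\sigma_j^2$ and $\|h_i'\|_2^2$ terms), which together with $b_2^i(T)=b_1^i(T)=b_0^i(T)=0$ determine $D_i$. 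Since all coefficient functions are polynomials in $y$, $W_i=V_i$ and $g_i$ are automatically in $C^{1,2,2}(\mathcal D)$, and $\Pi_i^*(t)$ has bounded deterministic mean and variance processes — the variance being $\big(\lambda_i(t)\|h_i'\|_2/(\gamma_i\sigma^2)\big)^2$ and $\Phi_{h_i}(\Pi_i^*(t))=\sigma_i^*(t)\|h_i'\|_2$ bounded on $[0,T]$ — so conditions (i)–(v) defining $\mathcal A(\Pi_j)$ hold. Hence Theorem~\ref{Nashequilibrium} applies and $(\Pi_1^*,\Pi_2^*)$ is a Nash equilibrium with $V_i(t,\hat x_i,y)=\hat x_i+D_i(t,y)$, i.e.\ \eqref{vgauss}, and $g_i(t,\hat x_i,y)=\hat x_i+d_i(t,y)$, i.e.\ \eqref{ggauss}.

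The routine but error-prone part is the ODE bookkeeping — correctly collecting the $y^2$, $y^1$ and $y^0$ coefficients of every term in \eqref{di} and especially \eqref{Di} (in particular the cross term involving $\partial_y d_i-1$ squared, the $\rho v\,\theta$ term, and the $\sigma_j^2$ coupling term), and then integrating the triangular systems in the correct order against their terminal conditions. The only genuine subtlety I anticipate is verifying that the quadratic-in-$y$ ansatz truly closes, i.e.\ that squaring $\partial_y d_i$ and multiplying affine coefficients by first-degree derivatives never generates a $y^3$ or higher term; this is guaranteed precisely because the drift of $Y$ is affine and $\theta(t,y)=y$ is linear, so every contribution stays of degree at most two. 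Everything else is direct substitution into Theorem~\ref{Nashequilibrium}.
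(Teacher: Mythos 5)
Your proposal is correct and follows essentially the same route as the paper: substitute the quadratic-in-$y$ ansatz for $d_i$ into \eqref{di}, solve the resulting triangular linear ODE system backward from the terminal conditions to get \eqref{ai}, plug $\partial_y d_i$ into \eqref{quantile} to obtain \eqref{quantilegauss}, and repeat for $D_i$ to get \eqref{dai}. Your additional remarks on why the quadratic ansatz closes and on verifying admissibility are sensible supplements but do not change the argument.
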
 with $b_0^i(T)=b_1^i(T)= b_2^i(T)=0$. 
\begin{proof}
	For the Gauss mean return model, \eqref{di} can be simplified as
	\begin{align}\label{digauss}
		\dfrac{\partial d_i(t,y)}{\partial t}+[\iota(Y-y)-\rho v y]\dfrac{\partial d_i(t,y)}{\partial y}+\dfrac{v^2}{2}\dfrac{\partial^2d_i(t,y)}{\partial y^2}+\dfrac{y^2}{\gamma_i}=0.
	\end{align}
	By letting $d_i(t,y)=\dfrac{1}{2}a_2^i(t)y^2+a_1^i(t)y+a_0^i(t)$ and substituting it into \eqref{digauss}, we obtain 	
	$$
		\left\{
		\begin{aligned}
			{a_2^i}'(t)&=2a_2^i(t)(\iota +\rho v)-\dfrac{2}{\gamma_i},& a_2^i(T)=0,\\
			{a_1^i}'(t)&=a_1^i(t)(\iota +\rho v)-a_2^i(t)\iota Y,& a_1^i(T)=0,\\
			{a_0^i}'(t)&=-a_1^i(t)\iota Y-\dfrac{v^2}{2}a_2^i(t),& a_0^i(T)=0.
		\end{aligned}
		\right.$$ 
	It can be shown that \eqref{ai} is the solution to \eqref{dai}. By substituting $d_i$ into \eqref{quantile},  we derive \eqref{quantilegauss}. Consequently,  $(\Pi_1,\Pi_2)$ is indeed a Nash equilibrium. Similarly, by simplying \eqref{Di},  we can get $D_i(t,y)=\dfrac{1}{2}b_2^i(t)y^2+b_1^i(t)y+b_0^i(t)$ with $b^i_n(t)$, $n=0,1,2$, given by \eqref{dai}. \qed
\end{proof}

The results for the complete market are straightforward when setting  $y=(a-r)/b,$ $\sigma=b$, $\iota=0$ and $v=0$, as stated below. 
\begin{corollary} In the Black-Scholes model, for $p\in (0,1)$, $i,j\in\{1,2\}$ and $i\neq j$, profile $(\Pi_1^*,\Pi_2^*)$ with quantile functions
	\begin{align}\label{eq:BS}
		\begin{split}
			Q_{\Pi_i^*(t)}(p)=&\dfrac{1}{1-k_1k_2}\left[\dfrac{a-r}{b^2}\left(\dfrac{1}{\gamma_i}+\dfrac{k_i}{\gamma_j}\right)\right]
			+\dfrac{\lambda_i(t)}{\gamma_i\sigma^2}h_i'(1-p)
		\end{split}
	\end{align}
	is a Nash equilibrium.
\end{corollary}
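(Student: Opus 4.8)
The plan is to obtain the corollary as the constant-coefficient, state-independent specialization of Theorem \ref{Nashequilibrium}. In the Black--Scholes model one has $a(t,y)\equiv a$ and $b(t,y)\equiv b$, hence $\theta(t,y)\equiv (a-r)/b$ is a constant and the state process $Y$ degenerates; formally this is the $\iota=0$, $v=0$, $Y(t)\equiv y=(a-r)/b$ case of Proposition \ref{prop:3.5}. The first step is to note that with $v\equiv 0$ (and $m\equiv 0$) the generator \eqref{eq:oper} loses every term containing $\partial_y$, $\partial_{yy}$ or $\partial_{xy}$, so that upon taking the reduced ansatz $W_i(t,x)=x+D_i(t)$, $g_i(t,x)=x+d_i(t)$ with $D_i(T)=d_i(T)=0$, the extended HJB system \eqref{hjbw}--\eqref{hjbg} collapses to an ODE system in $t$, which one solves explicitly ($d_i(t)=\theta^2(T-t)/\gamma_i$, and $D_i$ given by an elementary integral), yielding $C^1$ functions as required by the verification theorem.

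Substituting the ansatz, the inner supremum in \eqref{hjbw} simplifies, exactly as in the derivation of \eqref{simplifyhjb}, to the maximization over $\pi_i\in\mathcal M$ of
\begin{align*}
b\theta\big(\mu_i-k_i\mu_j\big)-\frac{\gamma_i}{2}b^2\big[(\mu_i^2+\sigma_i^2)+k_i^2(\mu_j^2+\sigma_j^2)-2k_i\mu_i\mu_j\big]+\lambda_i(t)\Phi_{h_i}(\pi_i),
\end{align*}
plus terms not depending on $\pi_i$. By Proposition \ref{prop:3.3} (which rests on Lemma \ref{lem:3.2}) it suffices to optimize over $(\mu_i,\sigma_i)$ with $\Phi_{h_i}(\pi_i)$ replaced by $\sigma_i\|h_i'\|_2$ and then read off the quantile \eqref{eq:lemma3}. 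The first-order conditions give $\mu_i^*(t)-k_i\mu_j^*(t)=\theta/(\gamma_i b)$ and $\sigma_i^*(t)=\lambda_i(t)\|h_i'\|_2/(\gamma_i b^2)$; solving the resulting $2\times 2$ linear system (the $v=0$ instance of \eqref{twomean}--\eqref{equimean}) gives $\mu_i^*(t)=\tfrac{1}{1-k_1k_2}\tfrac{\theta}{b}\big(\tfrac1{\gamma_i}+\tfrac{k_i}{\gamma_j}\big)$. Inserting $\theta=(a-r)/b$ and $\sigma=b$ and assembling $Q_{\Pi_i^*(t)}(p)=\mu_i^*(t)+\sigma_i^*(t)\,h_i'(1-p)/\|h_i'\|_2$ produces precisely \eqref{eq:BS}. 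Finally I would check admissibility: $\Pi_i^*(t)$ is a deterministic distribution, $\Phi_{h_i}(\Pi_i^*(t))=\sigma_i^*(t)\|h_i'\|_2$ is bounded on $[0,T]$, and $b$, $\theta$, $\mu_i^*$, $\sigma_i^*$ are bounded deterministic functions, so conditions (i)--(v) defining $\mathcal A(\Pi_j)$ hold and the verification theorem applies, giving that $(\Pi_1^*,\Pi_2^*)$ is a Nash equilibrium.

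The only point needing care is that $v=0$ lies outside the standing Gaussian mean return parametrization of Example \ref{exa:1}(i), so one should not merely plug $v=0$ into the formulas of Theorem \ref{Nashequilibrium}/Proposition \ref{prop:3.5}; I prefer to re-run the verification theorem directly as above, which is legitimate because that theorem is a verification statement and imposes no non-degeneracy on the diffusion. As a consistency check one may also note that the coefficients in \eqref{ai} have finite limits as $\iota+\rho v\to 0$ and that every $\rho v$-weighted term in \eqref{quantilegauss} and \eqref{dai} vanishes at $v=0$, while $y/\sigma=(a-r)/b^2$, so the $v\downarrow 0$ limit of Proposition \ref{prop:3.5} reproduces \eqref{eq:BS}. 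There is thus no genuine obstacle; the content is entirely the bookkeeping of this degeneration.
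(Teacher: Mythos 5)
Your proposal is correct and follows essentially the same route as the paper, which simply obtains the corollary by specializing Proposition \ref{prop:3.5} (equivalently Theorem \ref{Nashequilibrium}) to $y=(a-r)/b$, $\sigma=b$, $\iota=0$, $v=0$ so that all $\rho v$-weighted terms vanish and the $2\times 2$ system \eqref{twomean} yields the stated mean. Your extra step of re-running the verification theorem directly for the degenerate diffusion is a sound (and slightly more careful) way of justifying the same specialization, not a different argument.
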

In the Black-Scholes model, the  influence of $k_1$, $k_2$, $\gamma_1$, and $\gamma_2$  on the equilibrium strategies is clearly reflected in \eqref{eq:BS},  aligning with the properties of   Gauss mean return model discussed  in Section \ref{num}.

\section{Policy iteration}\label{sec:4}
In this section, we employ the policy iteration method to find equilibrium strategies in two steps.   For $i, j \in \{1, 2\}$ and $i \neq j$, we first  fix $\Pi_j$ and estimate the associated value function $V_i^{\Pi_i}$ given a policy $\Pi_i$. Then we  update the previous policy $\Pi_i$ to a new one $\tilde{\Pi}_i$ based on the obtained value function $V_i^{\Pi_i}$.  Despite the learning process not leading to a monotone iteration algorithm due to the ``optimality"  is in  the sense of  equilibrium,  we demonstrate that the iterative process converges uniformally  to the desired equilibrium policy.

 Assuming $\Pi_j$ is fixed,  and letting $\Pi_i$ be an admissible strategy for $i, j \in \{1,2\}$ with $i \neq j$, we denote the value function under $\Pi_i$  as $V_i^{\Pi_i}(t,\hat x_i,y)=J_i(t,\hat x_i,y;\Pi_i,\Pi_j)$. Similarly, we define $g_i^{\Pi_i}(t,\hat x_i,y)=\mathbb E_t[\hat X_i^{\Pi_i,\Pi_j}(T)]$. 
   According to \cite{BKM17},   the functions $V_i^{\Pi_i}$ and  $g_i^{\Pi_i}$ satisfy the following equations
\begin{align}\label{fmv}
	&\mathcal L^{\pi_i,\Pi_j}V^{\Pi_i}_i-\dfrac{\gamma_i}{2}\mathcal L^{\pi_i,\Pi_j}{g^{\Pi_i}_i}^2+\gamma_ig^{\Pi_i}_i\mathcal L^{\pi_i,\Pi_j}g^{\Pi_i}_i+\lambda_i(t)\Phi_{h_i}(\pi_i)=0,
\end{align}
and
\begin{align}\label{fmg}
	\mathcal L^{\pi_i,\Pi_j}g^{\Pi_i}_i(t,\hat x_i,y)=0, 
\end{align}
with
\begin{align}
	V^{\Pi_i}_i(T,\hat x_i,y)=\hat x_i,~~\ g^{\Pi_i}_i(T,\hat x_i,y)=\hat x_i.
\end{align}

\begin{theorem}\label{responceconvergence}
	For  $p\in (0,1)$, $i,j\in\{1,2\}$ and $i\neq j$, with $\Pi_j$  fixed,  let $\Pi_i^0$ be the initial policy of Agent $i$ with quantile function given by
	\begin{align}\label{pi0}
		Q_{\Pi_i^0(t)}(p)=\dfrac{y}{\gamma_i\sigma}+k_i\mu_j(t)-\dfrac{\rho v}{\sigma}(a_2^{i0}(t)y+a_1^{i0}(t))+\theta^0 h_i'(1-p).\end{align} 
	Choose one policy $$\pi_i\in \argmax\limits_{\pi_i\in\mathcal M}\{\mathcal L^{\pi_i,\Pi_j}V^{\Pi_i^n}_i-\dfrac{\gamma_i}{2}\mathcal L^{\pi_i,\Pi_j}{g^{\Pi_i^n}_i}^2+\gamma_ig^{\Pi_i^n}_i\mathcal L^{\pi_i,\Pi_j}g^{\Pi_i^n}_i+\lambda_i(t)\Phi_{h_i}(\pi_i)\},$$
	and denote this policy as $\Pi_i^{n+1}$, $n=0,1,2,...$ Then the following statements holds.
	\begin{enumerate}[(i)]
		\item The sequence of updated policies $\Pi_i^{n}$ for $n\geqslant 1$ has the quantile function
	    \begin{align}
	    	Q_{\Pi_i^n(t)}(p)=\dfrac{y}{\gamma_i\sigma}+k_i\mu_j(t)-\dfrac{\rho v}{\sigma}(a_2^{in}(t)y+a_1^{in}(t))+\dfrac{\lambda_i(t)}{\gamma_i\sigma^2} h_i'(1-p),\label{pinquantile}
	    \end{align}
	    where $a_1^{in}$ and $a_2^{in}$ satisfy
	    \begin{align}\label{pina}
	    	\left\{
		\begin{aligned}
			{a_2^{in}}'(t)&=2\iota a_2^{in}(t)+2\rho va_2^{in-1}(t) -\dfrac{2}{\gamma_i},&a_2^{in}(T)=0,\\
			{a_1^{in}}'(t)&=\iota a_1^{in}(t) +\rho va_1^{in-1}(t)-a_2^{in}(t)\iota Y,& a_1^{in}(T)=0.
		\end{aligned}
		\right.
	    \end{align}
	    \item As $n\to \infty$, $a_1^{in}(t)$ and $a_2^{in}$ uniformly converge to $a_1^i$ and $a_2^i$ in \eqref{ai}, respectively.
	\end{enumerate}
\end{theorem}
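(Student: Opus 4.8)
The plan is to prove \emph{(i)} by induction on $n$, alternating a policy-evaluation step (solving \eqref{fmv}--\eqref{fmg}) with a policy-improvement step (the maximization in the statement), and to prove \emph{(ii)} by a Volterra/Gr\"onwall analysis of the resulting error functions. For (i), fix $n\ge0$ and assume $\Pi_i^n$ has the claimed quantile form (with coefficient $\theta^0$ if $n=0$, with $\lambda_i(t)/(\gamma_i\sigma^2)$ if $n\ge1$). In the Gaussian model $b=\sigma$, $\theta(t,y)=y$, $m(t,y)=\iota(Y-y)$, $v$ is constant, and $\mu_i^n(t)-k_i\mu_j(t)=\tfrac{y}{\gamma_i\sigma}-\tfrac{\rho v}{\sigma}\big(a_2^{in}(t)y+a_1^{in}(t)\big)$ is affine in $y$, so $b\theta(\mu_i^n-k_i\mu_j)$ is quadratic in $y$ and $\mu_j$ cancels. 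Substituting the quadratic ansatz $g_i^{\Pi_i^n}(t,\hat x_i,y)=\hat x_i+\tfrac12 a_2^{i,n+1}(t)y^2+a_1^{i,n+1}(t)y+a_0^{i,n+1}(t)$ into \eqref{fmg} and matching powers of $y$ turns the PDE into a triangular linear ODE system whose $y^2$- and $y^1$-equations are exactly \eqref{pina} with $n$ replaced by $n+1$ and zero terminal data; note the variance of $\Pi_i^n$ does not enter \eqref{fmg}. A parallel ansatz solves \eqref{fmv} for $V_i^{\Pi_i^n}$, but inspecting \eqref{eq:oper} shows only $\partial_x V_i^{\Pi_i^n}\equiv 1$ and $\partial_y g_i^{\Pi_i^n}$ occur in the $\pi_i$-dependent part of the integrand, so $D_i$ plays no role in the improvement step. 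There the integrand depends on $\pi_i$ only through $\mu_i(\pi_i),\sigma_i^2(\pi_i)$ and $\Phi_{h_i}(\pi_i)$; by Lemma \ref{lem:3.2} (equivalently Proposition \ref{prop:3.3}) the maximization collapses to a strictly concave problem in $(m,s)$, whose first-order conditions give $m=\tfrac{y}{\gamma_i\sigma}+k_i\mu_j(t)-\tfrac{\rho v}{\sigma}\big(a_2^{i,n+1}(t)y+a_1^{i,n+1}(t)\big)$ and $s=\tfrac{\lambda_i(t)\Vert h_i'\Vert_2}{\gamma_i\sigma^2}$, and \eqref{eq:lemma3} then yields the quantile function \eqref{pinquantile} for $\Pi_i^{n+1}$; admissibility of $\Pi_i^{n+1}$ follows from continuity and the affine/bounded structure of its mean and variance together with $h_i'\in L^2$.

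For (ii), set $e_\ell^n:=a_\ell^{in}-a_\ell^i$ for $\ell=1,2$, where $a_1^i,a_2^i$ in \eqref{ai} solve the fixed-point version of \eqref{pina} (i.e.\ \eqref{pina} with $a_\ell^{i,n-1}$ replaced by $a_\ell^i$). Subtracting the ODEs,
\begin{align*}
(e_2^n)'(t)&=2\iota\,e_2^n(t)+2\rho v\,e_2^{n-1}(t),\qquad e_2^n(T)=0,\\
(e_1^n)'(t)&=\iota\,e_1^n(t)+\rho v\,e_1^{n-1}(t)-\iota Y\,e_2^n(t),\qquad e_1^n(T)=0.
\end{align*}
Using the integrating factor $e^{-2\iota t}$ with the zero terminal condition gives $e_2^n(t)=-2\rho v\int_t^T e^{2\iota(t-s)}e_2^{n-1}(s)\,\d s$, so $|e_2^n(t)|\le 2\rho v\int_t^T|e_2^{n-1}(s)|\,\d s$ because $e^{2\iota(t-s)}\le 1$ for $s\ge t$. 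Iterating this Volterra inequality $n$ times down to $e_2^0$ (bounded, being continuous on $[0,T]$) gives $\Vert e_2^n\Vert_\infty\le \tfrac{(2\rho vT)^n}{n!}\Vert e_2^0\Vert_\infty\to 0$, hence $a_2^{in}\to a_2^i$ uniformly. Feeding this into the analogous representation of $e_1^n$ yields $|e_1^n(t)|\le\rho v\int_t^T|e_1^{n-1}(s)|\,\d s+\iota YT\,\Vert e_2^n\Vert_\infty$, and iterating once more gives $\Vert e_1^n\Vert_\infty\le \tfrac{(\rho vT)^n}{n!}\Vert e_1^0\Vert_\infty+\iota YT\sum_{m=0}^{n-1}\tfrac{(\rho vT)^m}{m!}\Vert e_2^{n-m}\Vert_\infty$; since $\Vert e_2^k\Vert_\infty\to 0$ and $\sum_m(\rho vT)^m/m!<\infty$, a Toeplitz/dominated-convergence argument forces the right side to $0$, so $a_1^{in}\to a_1^i$ uniformly.

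The routine part is the ODE bookkeeping in (i) --- the index shift between the coefficients of $g_i^{\Pi_i^n}$ and those of $\Pi_i^{n+1}$, and the fact that $\mu_j$ and $D_i$ drop out of the recursion. The main obstacle is the uniform convergence in (ii): the crude bound $\Vert e^n\Vert_\infty\le\rho vT\,\Vert e^{n-1}\Vert_\infty$ is a contraction only when $\rho vT<1$, which need not hold, so one must exploit the upper-triangular (Volterra) structure of the error recursion to extract the $1/n!$ factor, and for $e_1^n$ combine this with the already-established decay of $\Vert e_2^n\Vert_\infty$.
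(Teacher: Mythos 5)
Your proposal is correct and follows essentially the same route as the paper: a quadratic ansatz for $g_i^{\Pi_i^n}$ turns \eqref{fmg} into the triangular ODE system \eqref{pina}, the first-order conditions of the $(m,s)$-maximization together with Lemma \ref{lem:3.2} give \eqref{pinquantile}, and the convergence in (ii) is extracted from the Volterra representation $e_2^n(t)=-2\rho v\int_t^T e^{2\iota(t-s)}e_2^{n-1}(s)\,\d s$ yielding the $1/n!$ factorial decay. The only cosmetic difference is in the $a_1^{in}$ estimate, where you close the argument with a convolution-sum/Toeplitz bound while the paper carries an explicit induction hypothesis $|\delta_n(t)|\le \frac{[\rho v(T-t)]^n}{n!}m+\frac{\iota Y}{\rho v}\frac{[2\rho v(T-t)]^{n+1}}{(n+1)!}M$; both are valid.
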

\begin{proof}
	(i) Note that $\Pi_i^0$ satisfies
	$$\begin{aligned}\label{pi0v}
	&\mathcal L^{\pi_i,\Pi_j}V^{\Pi^0_i}_i-\dfrac{\gamma_i}{2}\mathcal L^{\pi_i,\Pi_j}({g^{\Pi^0_i}_i})^2+\gamma_ig^{\Pi^0_i}_i\mathcal L^{\pi_i,\Pi_j}g^{\Pi^0_i}_i+\lambda_i(t)\Phi_{h_i}(\pi_i)=0,
    \end{aligned}$$
    and
    \begin{align}\label{pi0g}
	    \mathcal L^{\pi_i,\Pi_j}g^{\Pi^0_i}_i(t,\hat x_i,y)=0.
    \end{align}
    Consider $V^{\Pi^0_i}_i(t,\hat x_i,y)=\hat x_i+D^{\Pi^0_i}_i(t,y)$ and $g^{\Pi^0_i}_i(t,\hat x_i,y)=\hat x_i+d^{\Pi^0_i}_i(t,y)$.  Substituting $g^{\Pi^0_i}_i$ into \eqref{pi0g}, we get
    \begin{align}\label{pi0d}
    	\dfrac{\partial d_i^{\Pi^0_i}(t,y)}{\partial t}+\iota(Y-y)\dfrac{\partial d_i^{\Pi^0_i}(t,y)}{\partial y}+\dfrac{v^2}{2}\dfrac{\partial^2d_i^{\Pi^0_i}(t,y)}{\partial y^2}+\dfrac{y^2}{\gamma_i}-\rho v(a_2^{i0}(t)y^2+a_1^{i0}(t)y)=0.
    \end{align}
    Assuming  $d_i^{\Pi^0_i}(t,y)=\dfrac{1}{2}a_2^{i1}(t)y^2+a_1^{i1}(t)y+a_0^{i1}(t)$ and substituting it into \eqref{pi0d}, we get
    \begin{align}\label{pi0a}
		\left\{
		\begin{aligned}
			{a_2^{i1}}'(t)&=2\iota a_2^{i1}(t)+2\rho va_2^{i0}(t) -\dfrac{2}{\gamma_i},&a_2^{i1}(T)=0,\\
			{a_1^{i1}}'(t)&=\iota a_1^{i1}(t) +\rho va_1^{i0}(t)-a_2^{i1}(t)\iota Y,& a_1^{i1}(T)=0,\\
			{a_0^{i1}}'(t)&=-a_1^{i1}(t)\iota Y-\dfrac{v^2}{2}a_2^{i1}(t),&a_0^{i1}(T)=0.
		\end{aligned}
		\right.
	\end{align}
	By policy iteration, we know that
	\begin{align*}
		\Pi_i^1(t)\in \argmax\limits_{\pi_i\in\mathcal M}\left\{\mathcal L^{\pi_i,\Pi_j}V^{\Pi_i^0}_i-\dfrac{\gamma_i}{2}\mathcal L^{\pi_i,\Pi_j}{g^{\Pi_i^0}_i}^2+\gamma_ig^{\Pi_i^0}_i\mathcal L^{\pi_i,\Pi_j}g^{\Pi_i^0}_i+\lambda_i(t)\Phi_{h_i}(\pi_i)\right\}.
	\end{align*}
	By the first order conditions,  we have 
	\begin{align}\label{sigmagauss}
		\mu_i^1(t)
		=\dfrac{y}{\gamma_i\sigma}+k_i\mu_j(t)-\dfrac{\rho v}{\sigma}(a_2^{i1}(t)y+a_1^{i1}(t)),~~\text{and}~~
		\sigma_i^1(t)&=\dfrac{\lambda_i(t)\Vert h_i'\Vert_2}{\gamma_i\sigma^2}. \end{align} Repeating the above procedure, we then get \eqref{pinquantile} and \eqref{pina}.
	
	(ii) Denote $M=\sup\limits_{t\in[0,T]}|a_2^i(t)-a_2^{i0}(t)|$, $m=\sup\limits_{t\in[0,T]}|a_1^i(t)-a_1^{i0}(t)|$, $\Delta_{k+1}(t)=a_2^i(t)-a_2^{i(k+1)}(t)$ and $\delta_{k+1}(t)=a_1^i(t)-a_1^{i(k+1)}(t)$. We claim that
	\begin{align}\label{delta}
		|\Delta_n(t)|\leqslant \dfrac{[2\rho v(T-t)]^n}{n!}M.
	\end{align}
	The case for  $n=0$ is trivial. By induction, we assume that the inequality holds for $n=K$. Then it follows from \eqref{dai} and \eqref{pina} that $\Delta_{k+1}(t)$ satisfies
	\begin{align*}
		\Delta_{k+1}'(t)=2\iota \Delta_{k+1}(t)+2\rho v\Delta_k(t),\quad \Delta_{k+1}(T)=0.
	\end{align*}
	Solving this differential equation, we obtain $\Delta_{k+1}(t)=-\int_t^T2\rho ve^{2\iota(t-s)}\Delta_k(s)\d s$. Consequently,
	\begin{align*}
		|\Delta_{k+1}(t)|\leqslant\int_t^T2\rho v|\Delta_{k}(s)|\d s\leqslant\int_t^T2\rho v\dfrac{[2\rho v(T-s)]^k}{k!}M\d s=\dfrac{[2\rho v(T-t)]^{k+1}}{(k+1)!}M.
	\end{align*}
	Thus, \eqref{delta} holds. Similarly, we can prove by induction that
	\begin{align*}
		|\delta_n(t)|\leqslant\dfrac{[\rho v(T-t)]^n}{n!}m+\dfrac{\iota Y}{\rho v} \dfrac{[2\rho v(T-t)]^{n+1}}{(n+1)!}M.
	\end{align*}
	Thus, $a_1^{in}(t)$ and $a_2^{in}$ uniformly converge to $a_1^i$ and $a_2^i$ as $n\to \infty$, respectively.\qed
\end{proof}
Theorem \ref{responceconvergence}  shows that  the iteration does not change the form of the policy (see \eqref{pinquantile}),  and thus it suffices to parameterize the iterative policy through two deterministic functions $(a_1^{(in)}(t), a_2^{(in)}(t))$.   In particular, when the initial policy  chosen as the form of the equilibrium policy  in Proposition \ref{prop:3.5}, our algorithm is guaranteed to uniformly converge to the equilibrium policy. 

 The next result guarantees  the convergence of policies as the two agents iterate simultaneously.
\begin{theorem}\label{fixpoint}
	For an initial profile $(\Pi^0_1,\Pi^0_2)$, assume that two agents iterate simultaneously by \eqref{pinquantile} and the updated sequence is defined by $(\Pi_1^n,\Pi_2^n)$, $n=0,1,2,...$ Then for  $p\in (0,1)$, $i,j\in\{1,2\}$ and $i\neq j$, $Q_{\Pi_i^n(t)}(p)$ converges uniformly to $Q_{\Pi_i^*(t)}(p)$ of \eqref{quantilegauss} as $n\to\infty$.
\end{theorem}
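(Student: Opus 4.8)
The plan is to reduce the simultaneous, two-agent iteration to the single-agent analysis of Theorem~\ref{responceconvergence}, by exploiting the fact that the opponent's policy enters Agent~$i$'s wealth dynamics \eqref{dynamic} only through the combination $\mu_i(t)-k_i\mu_j(t)$ of \emph{means}, and that the first-order condition \eqref{mu} is built precisely so that this combination carries no dependence on $\mu_j$. Concretely, if at iteration level $n$ the profile $(\Pi_1^n,\Pi_2^n)$ has the form \eqref{pinquantile} with the means $\mu_1^n,\mu_2^n$ computed self-consistently from the $2\times 2$ system \eqref{twomean} (with $d_i$ replaced by the level-$n$ auxiliary function, whose parameters are $a_1^{in},a_2^{in}$), then — using $\int_0^1 h_i'(1-p)\,\d p=h_i(1)-h_i(0)=0$, so that $\mu_i^n(t,y)$ is exactly the $p$--independent part of \eqref{pinquantile} —
\begin{align*}
\mu_i^n(t,y)-k_i\mu_j^n(t,y)=\frac{y}{\gamma_i\sigma}-\frac{\rho v}{\sigma}\big(a_2^{in}(t)y+a_1^{in}(t)\big),
\end{align*}
and the system is uniquely solvable because $k_1k_2\in(0,1)$.

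First I would carry out the induction step. Write $g_i^{\Pi_i^n}(t,\hat x_i,y)=\hat x_i+d_i^{(n)}(t,y)$, where $d_i^{(n)}$ solves the linear equation $\mathcal L^{\Pi_i^n,\Pi_j^n}g_i^{\Pi_i^n}=0$; in the Gaussian model this reads $\partial_t d_i^{(n)}+\iota(Y-y)\partial_y d_i^{(n)}+\tfrac12 v^2\partial_{yy}d_i^{(n)}+\sigma y(\mu_i^n-k_i\mu_j^n)=0$, and by the display above its source term equals $\tfrac{y^2}{\gamma_i}-\rho v(a_2^{in}y^2+a_1^{in}y)$, which is exactly the equation treated in the proof of Theorem~\ref{responceconvergence}. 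Hence $d_i^{(n)}$ is quadratic in $y$ with coefficients $a_2^{i,n+1},a_1^{i,n+1}$ obeying the recursion \eqref{pina}, \emph{which involves only Agent $i$'s own previous coefficients and the model constants, not the opponent}. Plugging $V_i^{\Pi_i^n},g_i^{\Pi_i^n}$ into the maximization and using \eqref{mu}--\eqref{sigma} reproduces \eqref{pinquantile} at level $n+1$, the new means being obtained by inverting \eqref{twomean}. This closes the induction and shows the scheme is well defined, with the shape coefficients $a_1^{in},a_2^{in}$ evolving independently of the other agent.

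Next I would transfer the convergence. Theorem~\ref{responceconvergence}(ii) now applies verbatim to each agent separately: $a_1^{in}\to a_1^i$ and $a_2^{in}\to a_2^i$ uniformly on $[0,T]$, with the super-exponential (factorial) rate of \eqref{delta}. Solving \eqref{twomean} explicitly shows that $\mu_i^n(t,y)$ is the $p$--independent part of \eqref{quantilegauss} with $a_1^i,a_2^i,a_1^j,a_2^j$ replaced by $a_1^{in},a_2^{in},a_1^{jn},a_2^{jn}$; since the $h_i'$--term and the $y/(\gamma_i\sigma)$--term in \eqref{pinquantile} and \eqref{quantilegauss} coincide, we get $Q_{\Pi_i^n(t)}(p)-Q_{\Pi_i^*(t)}(p)=\mu_i^n(t,y)-\mu_i^*(t,y)$, which is affine in $y$ with coefficients
\begin{align*}
\frac{\rho v}{(1-k_1k_2)\sigma}\Big[(a_2^{in}-a_2^i)+k_i(a_2^{jn}-a_2^j)\Big],\qquad
\frac{\rho v}{(1-k_1k_2)\sigma}\Big[(a_1^{in}-a_1^i)+k_i(a_1^{jn}-a_1^j)\Big].
\end{align*}
These tend to $0$ uniformly in $t\in[0,T]$ and in $p\in(0,1)$ (locally uniformly in $y$), so $Q_{\Pi_i^n(t)}(p)\to Q_{\Pi_i^*(t)}(p)$ uniformly, as claimed.

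The main obstacle is the bookkeeping in the second paragraph: making sure the simultaneous updating genuinely decouples the shape coefficients from the opponent, so that Theorem~\ref{responceconvergence}(ii) transfers unchanged. Under the self-consistent (lock-step) reading of \eqref{pinquantile} the cancellation above is exact. If instead one adopts the ``one improvement step per agent, against the opponent's policy from the previous round'' convention, the cancellation is only approximate and the PDE for $d_i^{(n)}$ picks up a residual source $k_i\sigma y\,(\mu_j^{n-1}-\mu_j^n)$; in that case I would close the argument by a joint Gr\"onwall estimate on the four-dimensional error vector $\big(a_2^{1n}-a_2^1,\ a_1^{1n}-a_1^1,\ a_2^{2n}-a_2^2,\ a_1^{2n}-a_1^2\big)$, whose coupled linear iteration still inherits the factorial decay of \eqref{delta} and hence converges to the same uniform limit.
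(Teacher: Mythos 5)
Your argument is essentially correct, but it takes a genuinely different route from the paper, and the difference hinges on how one reads ``iterate simultaneously by \eqref{pinquantile}''. The paper reads it as a best-response-to-the-previous-round scheme: by \eqref{mu}, $\mu_i^{n+1}(t)=\tfrac{y}{\gamma_i\sigma}+k_i\mu_j^{n}(t)-\tfrac{\rho v}{\sigma}(a_2^i(t)y+a_1^i(t))$, so the mean vector obeys $\mu^{n+1}=A\mu^{n}+c$ with $A=\bigl(\begin{smallmatrix}0&k_1\\k_2&0\end{smallmatrix}\bigr)$, and the whole proof is a Banach contraction argument in $(\mathbb R^2,\|\cdot\|_\infty)$ with contraction constant $\max\{k_1,k_2\}<1$, yielding the geometric rate $\omega(\max\{k_1,k_2\})^n$; the affine term $c$ is frozen at the limiting coefficients $a_1^i,a_2^i$. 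Your main line instead adopts the lock-step self-consistent reading, in which the $2\times2$ system \eqref{twomean} is solved exactly at every level $n$; there the combination $\mu_i^n-k_i\mu_j^n$ is opponent-free, the recursion \eqref{pina} decouples agent by agent, and Theorem~\ref{responceconvergence}(ii) transfers verbatim with factorial rate. Both are legitimate, but note two things. First, your decoupling buys you the stronger (factorial) rate only because the self-consistent reading eliminates the $k_i(\mu_j^{n-1}-\mu_j^n)$ residual entirely; under the paper's reading that residual survives, the overall rate is dictated by the geometric factor $\max\{k_1,k_2\}^n$, and your fallback (a joint estimate on the coupled four-dimensional error vector, or equivalently a two-stage argument: first the $a^{in}$'s converge, then the contraction on the means) is genuinely needed rather than optional --- it is left as a sketch, which is the one place your write-up is thinner than it should be. Second, the issue you flag is not yours alone: the paper's own recursion also silently replaces the level-$n$ coefficients $a_1^{i,n},a_2^{i,n}$ in the affine term by their limits $a_1^i,a_2^i$, so the contraction argument as printed implicitly presupposes the convergence established in Theorem~\ref{responceconvergence}(ii); your proposal at least makes that dependence explicit. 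Your observation that $\int_0^1 h_i'(1-p)\,\d p=0$, so that the mean is the $p$-independent part of the quantile and $Q_{\Pi_i^n(t)}(p)-Q_{\Pi_i^*(t)}(p)=\mu_i^n-\mu_i^*$ after the first iterate (when the variance locks at $\lambda_i(t)\|h_i'\|_2/(\gamma_i\sigma^2)$), is correct and is the same reduction the paper uses to pass from convergence of means and variances to uniform convergence of quantile functions.
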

\begin{proof}
	We just need to prove that the mean and variance of $\Pi_i$ converge to the mean and variance of $\Pi_i^*$. The convergence of variance is obvious according to the proof of Theorem \ref{responceconvergence} (i) and \eqref{sigmagauss}. Let $\mu_i^n(t)$ be the mean of $\Pi_i^n(t)$. Based on \eqref{mu}, we have
	\begin{align*}
		\begin{bmatrix}
          \mu_1^{n+1}(t)\\
          \mu_2^{n+1}(t)
        \end{bmatrix}=
        \begin{bmatrix}
        	0 & k_1 \\
        	k_2 & 0 
        \end{bmatrix}
        \begin{bmatrix}
        	\mu_1^{n}(t)\\
            \mu_2^{n}(t)
        \end{bmatrix}+
        \begin{bmatrix}
        	\dfrac{y}{\gamma_1\sigma}-\dfrac{\rho v}{\sigma}(a_2^1(t)y+a_1^1(t))\\[4mm]
        	\dfrac{y}{\gamma_2\sigma}-\dfrac{\rho v}{\sigma}(a_2^2(t)y+a_1^2(t))
        \end{bmatrix}.
	\end{align*}
	Consider the normed  space $\mathbb R^2$  with  $\Vert\cdot\Vert$ defined as  $\Vert \vec{x}\Vert=\max\{x_1,x_2\}$ for  $\vec{x}=[x_1,x_2]'\in \mathbb R^2$. It is well known that $(\mathbb R^2,\Vert\cdot\Vert)$ is a Banach space. Define 
	\begin{align*}
		f(\vec{x})=\begin{bmatrix}
        	0 & k_1 \\
        	k_2 & 0 
        \end{bmatrix}
        \vec{x}+
        \begin{bmatrix}
        	\dfrac{y}{\gamma_1\sigma}-\dfrac{\rho v}{\sigma}(a_2^1(t)y+a_1^1(t))\\[4mm]
        	\dfrac{y}{\gamma_2\sigma}-\dfrac{\rho v}{\sigma}(a_2^2(t)y+a_1^2(t))
        \end{bmatrix}.
	\end{align*}
	We have $\Vert f(\vec{x})-f(\vec{y})\Vert\leqslant \max\{k_1,k_2\}\Vert \vec{x}-\vec{y} \Vert$, establishing $f$ as a contraction mapping with a unique fixed point. By \eqref{twomean},   the fixed point is precisely $[\mu_1^*(t),\mu_2^*(t)]'$. Thus,  the mean of $\Pi_i^n(t)$ also converges. Moreover, let $\omega$ be an uniformly upper bound of $|\mu_i^0(t)-\mu_i^*(t)|$. Then we have
	\begin{align*}
		\left\Vert\begin{bmatrix}
          \mu_1^{n}(t)\\
          \mu_2^{n}(t)
        \end{bmatrix}-
        \begin{bmatrix}
          \mu_1^{*}(t)\\
          \mu_2^{*}(t)
        \end{bmatrix}\right\Vert=&
        \left\Vert\begin{bmatrix}
        	0 & k_1 \\
        	k_2 & 0 
        \end{bmatrix}
        \begin{bmatrix}
          \mu_1^{n-1}(t)-\mu_1^*(t)\\
          \mu_2^{n-1}(t)-\mu_2^*(t)
        \end{bmatrix}\right\Vert\\
        =&\left\Vert\begin{bmatrix}
        	0 & k_1 \\
        	k_2 & 0 
        \end{bmatrix}^n
        \begin{bmatrix}
          \mu_1^{0}(t)-\mu_1^*(t)\\
          \mu_2^{0}(t)-\mu_2^*(t)
        \end{bmatrix}\right\Vert\\
        \leqslant & \omega(\max\{k_1,k_2\})^n.
	\end{align*}
	Thus, for $p\in (0,1)$,  $i,j\in\{1,2\}$ and $i\neq j$, $Q_{\Pi_i^n(t)}(p)$ converges uniformly to $Q_{\Pi_i^*(t)}(p)$ of \eqref{quantilegauss} as $n\to\infty$.
	\qed
\end{proof}
\section{RL algorithm design}\label{sec:5}
In this section, we devise an algorithm to learn the Nash equilibrium. As mentioned in the introduction, game scenarios involve multiple agents in the environment, requiring the utilization of multi-agent reinforcement learning algorithms, which inherently introduce greater complexity. Specifically, in single-agent reinforcement learning, a basic assumption is the stability of the environment, wherein the transition probability and reward function remain constant. However, when other intelligent agents are introduced into the environment, this assumption no longer holds true. In a multi-agent context, any change in one agent's strategy can significantly impact other agents, leading to dynamic evolution of the environment with their strategies. Moreover, as the number of agents increases, the complexity of training also escalates. 
Fortunately, in our model, one agent will only affect the mean of the other agent's strategy, and the difference $\mu_i(t)-k_i\mu_j(t)$  for $i,j\in\{1,2\}$ and $i\neq j$ is fixed no matter how the strategies of the two agents change based on \eqref{equimean}. So, once we know the difference, we can directly use the learning procedure in Theorem \ref{fixpoint}. Thus, we can simplify this game problem into two independent optimization problems, and then apply the method proposed by \cite{DDJ23} to learn the difference.  Next we briefly introduce this method.

Assume that the risk-free interest rate $r$, the risk-aversion coefficient $\gamma_1, \gamma_2$, the sensitivity coefficient $k_1, k_2$ and the exploration weight $\lambda_1, \lambda_2$ are known. The agents do not have any information about $S(t)$ and $Y(t)$ but can observe $(S(t),Y(t))$ at time $t$. In the continuous time setting, we should discretize $[0,T]$ into $N$ intervals with equal length $\Delta t=t_{k+1}-t_k, k=0,1,...,N-1$ first. From \eqref{dynamics} and \eqref{dynamicx}, we get
\begin{align*}
	\d X_i^{u_i}(t)=u_i(t)\dfrac{\d e^{-rt}S(t)}{e^{-rt}S(t)}.
\end{align*}
Therefore, when the Agent $i$ follows strategy $\Pi_i(t_k)$ at time $t_k$ and samples action $u_i(t_k)$ from $\Pi_i(t_k)$, the discounted wealth at $t_{k+1}$ is
\begin{align*}
	X_i^{\Pi_i}(t_{k+1})\approx X_i^{\Pi_i}(t_k)+u_i(t_k)\dfrac{e^{-rt_{k+1}}S(t_{k+1})-e^{-rt_k}S(t_k)}{e^{-rt_k}S(t_k)}.
\end{align*}

The algorithm to learn the difference $\mu_i(t)-k_i\mu_j(t)$ is based on the basic idea of Actor-Critic algorithm in \cite{KT99}. Assume that the strategy $\Pi_j$ of Agent $j$ is fixed. By Theorem \ref{responceconvergence},  we can parameterize \eqref{pi0} with $\Phi=(\phi_0,\phi_1,\phi_2,\phi_3)$ to
\begin{align}\label{paraq}
Q_{\Pi_i^{\Phi}(t)}(p)=k_i\mu_j(t)+\phi_0y-\phi_1\dfrac{1-e^{-2\phi_2(T-t)}}{\phi_2}y-\phi_3\dfrac{(1-e^{-\phi_2(T-t)})^2}{\phi_2^2}+\lambda_i\phi_0^2\gamma_ih_i'(1-p),	
\end{align}
as the ``Actor". At the same time, due to Proposition \ref{prop:3.5}, we can parameterize \eqref{vgauss} and \eqref{ggauss} respectively to be the ``Critic"  as follows
\begin{align}\label{paravg}
	\begin{split}
		V_i^{\Theta}(t,\hat x_i,y)&=\hat x_i+p(\theta_i^{V,2},T-t)y^2+p(\theta_i^{V,1},T-t)y+p(\theta_i^{V,0},T-t),\\
		g_i^{\Theta}(t,\hat x_i,y)&=\hat x_i+p(\theta_i^{g,2},T-t)y^2+p(\theta_i^{g,1},T-t)y+p(\theta_i^{g,0},T-t),
	\end{split}
\end{align}
where $p(\theta,t)$ is an appropriate function with parameter $\theta\in \mathbb R^d$  for approximating a continuous function of $t$ and $\Theta=(\theta_i^{V,0},\theta_i^{V,1},\theta_i^{V,2},\theta_i^{g,0},\theta_i^{g,1},\theta_i^{g,2})\in\mathbb R^{6d}$. Usually, one can choose the sum of the first $d$ terms of the Taylor series or Fourier series as $p(\theta,t)$, and the parameter $\theta$ is the coefficient corresponding to each term.

Building on the formal consistency observed in  Theorem  \ref{responceconvergence} (i),  and using  the parameterized expression, we can now  proceed to update both the Actor and Critic. By  applying Itô's fomula, for any $\varphi\in C^{1,2,2}$ and strategy $\Pi_i$ of Agent $i$, we have
\begin{align*}
\begin{split}
	&\mathbb E_t[\varphi(t+\Delta t,\hat X_i^{\Pi_i,\Pi_j}(t+\Delta t),Y(t+\Delta t))]-\varphi(t,\hat X_i^{\Pi_i,\Pi_j}(t),Y(t))\\
	& =\mathbb E_t\int_t^{t+\Delta t}\mathcal L^{\Pi_i,\Pi_j}\varphi(s,\hat X_i^{\Pi_i,\Pi_j}(s),Y(s))\d s,
\end{split}
\end{align*}
and thus
\begin{align}\label{infgen}
	\mathcal L^{\Pi_i,\Pi_j}\varphi(t,\hat X_i^{\Pi_i,\Pi_j}(t),Y(t))\approx \dfrac{\varphi(t+\Delta t,\hat X_i^{\Pi_i,\Pi_j}(t+\Delta t),Y(t+\Delta t))-\varphi(t,\hat X_i^{\Pi_i,\Pi_j}(t),Y(t))}{\Delta t}.
\end{align}
When Agent $i$ employs  the  strategy $\Pi_i^{\Phi}$, the functions $V_i^{\Pi^{\Phi}_i}$ and  $g_i^{\Pi^{\Phi}_i}$ satisfy the dynamic equations given in \eqref{fmv} and \eqref{fmg}, respectively. We can use \eqref{infgen} to approximate \eqref{fmv} and \eqref{fmg} by replacing $\varphi$ with $V_i^{\Pi^{\Phi}_i}$, $g_i^{\Pi^{\Phi}_i}$ and  $(g_i^{\Pi^{\Phi}_i})^2$. Specifically, we define the following temporal-difference (TD) error terms 
\begin{align}\label{c1}
	\begin{split}
		C_i^{1(k)}(\Phi,\Theta)=&\dfrac{V_i^{\Theta}(t_{k+1},\hat X_i(t_{k+1}),Y(t_{k+1}))-V_i^{\Theta}(t_{k},\hat X_i(t_{k}),Y(t_{k}))}{\Delta t}\\
		&+\gamma_ig_i^{\Theta}(t_{k},\hat X_i(t_{k}),Y(t_{k}))\dfrac{g_i^{\Theta}(t_{k+1},\hat X_i(t_{k+1}),Y(t_{k+1}))-g_i^{\Theta}(t_{k},\hat X_i(t_{k}),Y(t_{k}))}{\Delta t}\\
		&-\dfrac{\gamma_i}{2}\dfrac{g_i^{\Theta}(t_{k+1},\hat X_i(t_{k+1}),Y(t_{k+1}))^2-g_i^{\Theta}(t_{k},\hat X_i(t_{k}),Y(t_{k}))^2}{\Delta t}\\
		&+\lambda_i\Phi_{h_i}(\Pi^{\Phi}_i(t_k)),
	\end{split}
\end{align}
and
\begin{align}\label{c2}
	C_i^{2(k)}(\Phi,\Theta)=\dfrac{g_i^{\Theta}(t_{k+1},\hat X_i(t_{k+1}),Y(t_{k+1}))-g_i^{\Theta}(t_{k},\hat X_i(t_{k}),Y(t_{k}))}{\Delta t}.
\end{align}
In fact, $C_i^{1(k)}$ and $C_i^{2(k)}$ are extensions of the TD  error in \cite{D20} and \cite{SB18} to the time inconsistent problem.  The purpose of updating Critic is to select appropriate parameters $\Theta$ to make $V_i^{\Theta}$ and $g_i^{\Theta}$ sufficiently close to $V_i^{\Pi^{\Phi}}$ and $g_i^{\Pi^{\Phi}}$, respectively. This is achieved by minimizing the functions  $$\sum\limits_{k=0}^{N-1}C_i^{1(k)}(\Theta,\Phi)^2, ~\text{and} ~\sum\limits_{k=0}^{N-1}C_i^{2(k)}(\Theta,\Phi)^2.$$ Note that calculating the partial derivative $\partial C_i^{1(k)}/\partial \Theta$ and $\partial C_i^{1(k)}/\partial \Theta$ is feasible due to \eqref{paravg}. These gradients facilitate the calculation of  $\nabla \Theta$, enabling optimization through stochastic gradient descent.

As for updating Actor, we apply the   iteration  procedure outlined Theorem \ref{responceconvergence}.  We define
\begin{align*}
	L_i(\Phi;t,\hat x,y):=\mathcal L^{\Pi^{\Phi}_i(t),\Pi_j}V^{\Theta}_i-\dfrac{\gamma_i}{2}\mathcal L^{\Pi^{\Phi}_i(t),\Pi_j}{g^{\Theta}_i}^2+\gamma_ig^{\Theta}_i\mathcal L^{\Pi^{\Phi}_i(t),\Pi_j}g^{\Theta}_i+\lambda_i\Phi_{h_i}(\Pi^{\Phi}_i(t)). 
\end{align*}
Theorem \ref{responceconvergence} requires us to maximize $L_i(\Phi;t,\hat x,y)$ for all possible $t,\hat x,y$ in the domain of $\Phi$.  However, since only sampled data  $(t_k,\hat X_i(t_k),Y(t_k))$ is available, we approximate this by maximizing $\sum\limits_{k=0}^{N-1}L_i(\Phi;t_k,\hat X_i(t_k),Y(t_k))$ instead. We also use gradient descent method to realize maximization. To calculate the gradient of $\nabla_{\Phi}L_i(\Phi)$, we adopt the smoothed functional gradient method described in  \cite[Chapter 6]{BPP13}. The smoothed gradient is defined as \begin{align}\label{sf}
	D_{\kappa}L_i(\Phi)=\int G_{\kappa}(\Phi-\vec{\eta})\nabla_{\vec{\eta}}L_i(\vec{\eta})d\vec{\eta},
\end{align}
where $G_{\kappa}$ is the density of $\mathcal N(\vec{0},\kappa^2I_4)$.  Thus, $D_{\kappa}L_i(\Phi)$ is a smoothed gradient of $L_i(\Phi)$ and $\lim\limits_{\kappa\rightarrow 0}D_{\kappa}L_i(\Phi)=\nabla_{\Phi}L_i(\Phi)$. Applying integration by parts to \eqref{sf}, we get
\begin{align*}
	D_{\kappa}L_i(\Phi)&=-\int \nabla_{\vec{\eta}}G_{\kappa}(\Phi-\vec{\eta})L_i(\vec{\eta})\d\vec{\eta}\\
	&=\int \nabla_{\vec{\eta}}G_{\kappa}(\vec{\eta})L_i(\Phi-\vec{\eta})\d\vec{\eta}\\
	&=-\int \dfrac{\vec{\eta}}{\kappa^2} G_{\kappa}(\vec{\eta})L_i(\Phi-\vec{\eta})\d\vec{\eta}\\
	&=-\dfrac{1}{\kappa}\int \vec{\eta} G_{1}(\vec{\eta})L_i(\Phi-\kappa\vec{\eta})d\vec{\eta}\\
	&=\mathbb E\left[\dfrac{1}{\kappa}ZL_i(\Phi+\kappa Z)\right],
\end{align*}
where  $Z$ is a standard multivariate Gaussian stochastic vector. Therefore,  when $\kappa$ is small enough, we can obtain an approximation of the gradient
\begin{align*}
	\nabla_{\Phi}L_i(\Phi)\approx \dfrac{1}{\kappa}ZL_i(\Phi+\kappa Z).
\end{align*}
Further, we can reduce the variance by
\begin{align}\label{gradientphi}
	\nabla_{\Phi}L_i(\Phi)\approx \dfrac{1}{\kappa}Z(L_i(\Phi+\kappa Z)-L_i(\Phi)).
\end{align}
It is obvious that $L_i(\Phi;t_k,\hat X_i(t_k),Y(t_k))\approx C_i^{1(k)}(\Phi,\Theta)$ by \eqref{c1} and the definition of $L_i$. To calculate $L_i(\Phi+\kappa Z;t_k,\hat X_i(t_k),Y(t_k))$, let $\overline\Phi=\Phi+\kappa Z$ be the perturbed parameters. Sampling action $\overline u_i(t_k)$ from $\Pi^{\overline\Phi}_i(t_k)$, we obtain
\begin{align*}
	\overline X_i^{\Pi_i}(t_{k+1})\approx X_i^{\Pi_i}(t_k)+\overline u_i(t_k)\dfrac{e^{-rt_{k+1}}S(t_{k+1})-e^{-rt_k}S(t_k)}{e^{-rt_k}S(t_k)}.
\end{align*}
Define $\hat{\overline X}_i(t_k)=\overline X_i^{\Pi_i}(t_{k})-k_iX_j^{\Pi_j}(t_{k})$, we have
\begin{align*}
	\begin{split}
		L_i(\overline\Phi)\approx &C_i^{1(k)}(\overline\Phi,\Theta)\\
		=&\dfrac{V_i^{\Theta}(t_{k+1},\hat{\overline X}_i(t_{k+1}),Y(t_{k+1}))-V_i^{\Theta}(t_{k},\hat X_i(t_{k}),Y(t_{k}))}{\Delta t}\\
		&+\gamma_ig_i^{\Theta}(t_{k},\hat X_i(t_{k}),Y(t_{k}))\dfrac{g_i^{\Theta}(t_{k+1},\hat {\overline X}_i(t_{k+1}),Y(t_{k+1}))-g_i^{\Theta}(t_{k},\hat X_i(t_{k}),Y(t_{k}))}{\Delta t}\\
		&-\dfrac{\gamma_i}{2}\dfrac{g_i^{\Theta}(t_{k+1},\hat {\overline X}_i(t_{k+1}),Y(t_{k+1}))^2-g_i^{\Theta}(t_{k},\hat X_i(t_{k}),Y(t_{k}))^2}{\Delta t}\\
		&+\lambda_i\Phi_{h_i}(\Pi^{\overline\Phi}_i(t_k)).
	\end{split}
\end{align*}
 We can  then calculate the gradient $\nabla_{\Phi}L_i(\Phi)$ by \eqref{gradientphi}, and update the Actor using the Adam algorithm in \cite{KB14}.
In summary, we have Algorithm 1.
\begin{algorithm}
	\caption{}	
	\textbf{Input:} initial wealth $x_1,x_2$, risk-free interest rate $r$, exploration weight $\lambda_1,\lambda_2$, the parameters ($\sigma,\iota,Y,v,\rho$) of Market, investment horizon $T$, time step $\Delta t$, number of time grids $N$, learning rates $\alpha$, number of iterations $M$, smoothing functional parameter $\kappa$, risk-aversion coefficient $\gamma_1,\gamma_2$, sensitivity coefficient $k_1,k_2$ and a simulator of the market called $Market$.\\
	\textbf{Learning procedure:}
	Initialize $\Theta,\Phi$.
	\begin{algorithmic}
		\FOR {$m=1$ \textbf{to} $M$}
		\STATE Initialize $n=0$.
		\STATE $x_i(t_n) \leftarrow x_i,x_j(t_n) \leftarrow x_j,\overline x_i(t_n) \leftarrow x_i$.
		\STATE Compute and store $\hat x_i(t_n)=x_i(t_n)-k_ix_j(t_n)$ and $\hat{\overline x}_i(t_n)=\overline x_i(t_n)-k_ix_j(t_n)$.
		\WHILE {$n<N$}
		\STATE Sample $u_j(t_n)$ from $\Pi_j(t_n)$.
		\STATE Apply $u_j(t_n)$ to the market simulator and get the state $x_j$ of Agent $j$ at $t_{n+1}$.
		\STATE Store $x_j(t_{n+1})\leftarrow x_j$.
		\STATE Sample $u$ from uniform distribution on [0,1].
		\STATE Sample $z_n$ from $\mathcal N(\vec{0},I_4)$ and compute $\overline\Phi=\Phi+\kappa z$.
		\STATE Use $u$ to generate $u_i(t_n)$ and $\overline u_i(t_n)$ based on \eqref{paraq}.
		\STATE Apply $u_i(t_n)$ and $\overline u_i(t_n)$ to the market simulator and get the state $x_i$ and $\overline x_i$ at $t_{n+1}$. 
		\STATE Store $x_i(t_{n+1})\leftarrow x_i$ and$\overline x_i(t_{n+1})\leftarrow \overline x_i$.
		\STATE Compute $\hat x_i(t_{n+1})=x_i(t_{n+1})-k_ix_j(t_{n+1})$ and $\hat{\overline x}_i(t_{n+1})=\overline x_i(t_{n+1})-k_ix_j(t_{n+1})$.
		\STATE $n\leftarrow n+1$.
		\ENDWHILE
		\STATE Compute $V_i^{\Theta}$ and $g_i^{\Theta}$.
		\STATE Compute the TD error $C_i^{1(k)}(\Phi,\Theta)$ and $C_i^{2(k)}(\Phi,\Theta)$ for all $k$.
		\STATE Compute the gradient $\nabla \Theta$ of $\sum\limits_{k=0}^{N-1}C_i^{1(k)}(\Theta,\Phi)^2$ and $\sum\limits_{k=0}^{N-1}C_i^{2(k)}(\Theta,\Phi)^2$.
		\STATE Update $\Theta\leftarrow \Theta-\alpha\nabla\Theta$.
		\STATE Compute the TD error $C_i^{1(k)}(\Phi,\Theta)$ and $C_i^{1(k)}(\overline\Phi,\Theta)$ for all $k$.
		\STATE Compute the gradient $\nabla\Phi=\sum\limits_{k=0}^{N-1}\dfrac{z_k}{\kappa}(C_i^{1(k)}(\overline\Phi,\Theta)-C_i^{1(k)}(\Phi,\Theta))$
		\STATE Update $\Phi$ by the Adam algorithm.
		\ENDFOR
	\end{algorithmic}
\label{alg:1} \end{algorithm}

\section{Numerical results}\label{num}

Before we proceed with exploration, given the variety of Choquet regularizers available, it is possible to select different regularizers for each agent. In equation \eqref{eq:quan_rep}, $h'(x)$ represents the ``probability weight" assigned to $x$ when calculating the (nonlinear) Choquet expectation (see, e.g., \cite{GS89} and \cite{Q82}). Consequently, the choice of the distortion function $h$  can directly influence the agent's attitude toward risk. As shown by \cite{HWZ23}, Choquet regularizers can generate several widely used exploratory samplers, such as the $\epsilon$-greedy strategy, exponential, uniform and Gaussian.  Below, we assume that the agents adopt different Choquet regulizers, resulting in their optimal exploration distributions being normal and uniform, respectively.

Assume that   Agent $1$  applies the  the Choquet regulizer \begin{equation}\label{Normal}\Phi_{h_1} (\Pi_1)=\int_0^1 Q_{\Pi_1}(p) z (p) \d p,\end{equation}  where  
$z$ is the quantile function of a standard normal distribution, yielding  $h_1(p)=\int_0^p z(1-s)\d s$ with  $p\in [0,1]$. 
Further, Agent $2$ uses the Choquet regulizer   \begin{equation}\label{Gini}\Phi_{h_2}   (\Pi_2) =\frac 12 \mathbb{E}[|X_1-X_2|],\end{equation} in which  $X_1$ and $X_2$ are two iid copies from the distribution $\Pi_2$. It is 
known as the \emph{Gini mean difference} (e.g., \cite{FWZ17} with   $h_2(p)= p-p^2$ with  $p\in [0,1]$.  Let $$\mu_i^*(t)= \dfrac{1}{1-k_1k_2}\left[\dfrac{y}{\sigma}\left(\dfrac{1}{\gamma_i}+\dfrac{k_i}{\gamma_j}\right)-\dfrac{\rho v}{\sigma}\left((a_2^i(t)+k_ia_2^j(t))y+(a_1^i(t)+k_ia_1^j(t))\right)\right],$$ where  $a^i_n(t)$,~$n=1,2$,  are given by \eqref{ai}.   Based on \eqref{quantile},   the   equilibrium distribution $\Pi_1^*$ is a  normal  distribution   given as
$$
{\Pi}_1^{*} (t)=  {\mathrm N}\left(\mu_1^*(t), \dfrac{\lambda_1(t)}{\gamma_1\sigma^2}\right),
$$  and the equilibrium distribution  $\Pi_2^*$ is  a uniform  distribution   given as
$$\mathrm{U}\left[ u_2^*(t)-\dfrac{\lambda_2(t)}{3\gamma_2\sigma^2}, 
		u_2^*(t)+\dfrac{\lambda_2(t)}{3\gamma_2\sigma^2}	 \right]. $$
Next, we investigate the influence of parameters $k_i$, $\gamma_i$ and $t$  on the  equilibrium strategies of  both agents. We assume that   $\lambda_1(t)=\lambda_2(t) = \lambda_0 e^{\lambda_0(T-t)}$.   Unless otherwise specified, the parameters in \eqref{quantile} are set as in Table \ref{tab:parameters}.\footnote{For the Gauss mean return model, \cite{W02} estimates the market parameters, and \cite{DDJ23} uses these parameters for their algorithm.  In this work, we also adopt these parameters for our analysis.
}  
\begin{table}[ht]
\centering
\caption{Parameter values used in the modelh}
\begin{tabular}{|c|c|c|c|c|c|c|c|c|c|c|c|c|c|}
\hline
$\rho$ & $r$ & $\sigma$ & $\iota$ & $v$ & $Y$ & $\gamma_1$ & $\gamma_2$ & $k_1$ & $k_2$ & $\lambda_0$ & $T$ \\ 
\hline
-0.93 & 0.017 & 0.15 & 0.27 & 0.065 & 0.273 & 2 & 1 & 0.1 & 0.05 & 0.01 & 20 \\ 
\hline
\end{tabular}

\label{tab:parameters}
\end{table}

\begin{figure}[htbp!]
\centering
\includegraphics[scale=0.45]{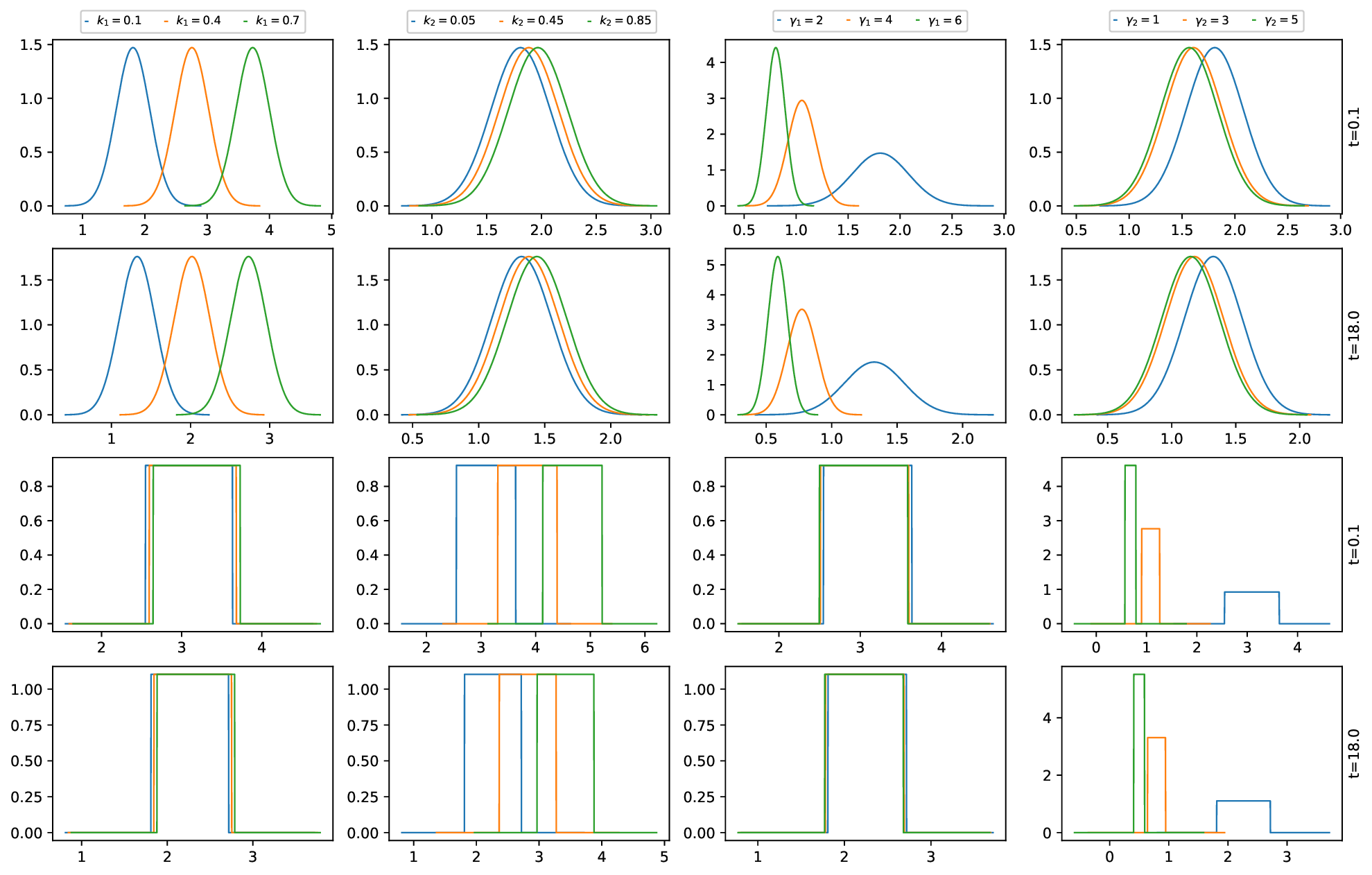}
\caption{The effects of $t$,  $k_1$, $k_2$, $\gamma_1$, and $\gamma_2$  on the  Nash equilibrium}
\label{fig:1}
\end{figure}
 In Figure \ref{fig:1}, the first and second rows display the density functions for Agent 1 at $t = 0.1$ and $t = 18$, respectively, while the third and fourth rows correspond to Agent 2’s density functions. For clarity, we focus on the characteristics of the parameters for Agent 1, as Agent 2’s performance with respect to these parameters is similar. The key observations are as follows.
\begin{itemize}
\item[(i)] As $k_1$ increases, Agent 1 tends to adopt riskier strategies, leading to a higher mean investment in risky assets. This suggests that greater sensitivity to the opponent's performance enhances Agent 1’s motivation to outperform.
\item[(ii)]  As Agent $1$'s  risk aversion parameter $\gamma_1$  rises, the mean of the equilibrium distribution decreases. This indicates that higher levels of risk aversion prompt Agent 1 to adopt more cautious strategies, reducing their expected investment in risky assets.
\item[(iii)] As Agent $2$ becomes more sensitive  to Agent $1$'s behavior  (i.e.,  as $k_2$  increases), Agent 1 tends to adopt riskier strategies to increase the likelihood of achieving higher returns. This suggests that Agent 2's increased sensitivity further motivates Agent 1 to excel.
\item[(iv)]  As Agent $2$ becomes more risk-seeking (i.e., as $\gamma_2$ decreases), Agent 1 adopts more aggressive strategies, investing a higher mean in risky assets. This adjustment is necessary to prevent Agent 1 from losing market share or competitive advantages if they fail to align their strategy with the increased risks.

\item[(v)] Comparing the equilibrium strategies between the first and second rows, the main difference lies in time 
 $t$. Since a time-decaying temperature parameter $ \lambda_1(t)$ is employed, it means that as time progresses, the weight assigned to exploration  decreases as time progresses. Consequently, the variance of the equilibrium strategies also decreases, as reflected in \eqref{sigma}.
 \end{itemize}
 It is noteworthy that an agent's own parameters consistently have a significantly greater impact on their strategy than the parameters of the opponent.

Now we conduct numerical experiments with simulated data to demonstrate our  Algorithm \ref{alg:1}. We first emphasize that there are some factors that affect the accuracy of the algorithm. First of all, Theorems \ref{responceconvergence} and \ref{fixpoint} give theoretical convergence, which provides support for algorithm design, but the theoretical results depend on the parameters of the market model and these parameters are actually unknown. Secondly, as mentioned earlier, a very important property in classical reinforcement learning is that the strategy obtained after each update is better than before, but this property no longer holds in time-inconsistent problem. These increases the impact of errors on the convergence in each algorithm iteration, which may affect the convergence to the true equilibrium strategy. Finally, due to the use of maximizing $\sum\limits_{k=0}^{N-1}L_i(\Phi;t_k,\hat X_i(t_k),Y(t_k))$ instead of maximizing $L_i(\Phi;t,\hat x,y)$ for all possible $t,\hat x,y$, each iteration highly relies on the current sample, and different samples may cause the algorithm to converge to different strategies. Meanwhile, similar to classical reinforcement learning, due to the time horizon is finite, each sample only has $N+1$ time points, which is far less than the number required to make the strategy maximizing $\sum\limits_{k=0}^{N-1}L_i(\Phi;t_k,\hat X_i(t_k),Y(t_k))$ and maximizing $L_i(\Phi;t,\hat x,y)$ for all possible $t,\hat x,y$ close enough. In summary, due to various reasons, algorithms for time-inconsistency problems rely more on model settings, especially parameter selection, than the classical reinforcement learning.

We use the stock process parameters detailed in Table \ref{tab:parameters}. Additionally, the other parameter settings for the algorithm are presented in Table \ref{tab:parameters2}.  \begin{table}[h!]
\centering
\caption{Parameter settings for the algorithm}
\label{tab:algorithm_parameters}
\begin{tabular}{|c|c|c|c|c|c|c|c|c|}
\hline
$T$ & $N$ & $\alpha$ & $\gamma_1$ & $\gamma_2$ & $k_1$ & $k_2$ & $\lambda_1$ & $\lambda_2$ \\ \hline
1   & 250 & 0.001    & 2          & 3          & 0.1   & 0.05  & 0.015       & 0.02        \\ \hline
\end{tabular}\label{tab:parameters2}
\end{table}
 For the hyper-parameters in the Adam algorithm, we use the default values given in \cite{KB14}. The Critic is parameterized as in \eqref{paravg} with $p(\theta,t)$  chosen as
\begin{align*}
	p(\theta,t)=\theta_1t^2+\theta_0t.
\end{align*}
The Actor is parameterized as \eqref{paraq} with suitable initial values selected based on the problem context. 
We use $M=50,000$ samples to train the model. As mentioned earlier, since the sample directly affects the results, we conduct  10 experiments and take the average value as our final result. Finally, we plot the mean value of the discounted value invested in the risk asset when the market state $Y(t)=0.273$ for Nash equilibrium in Figure \ref{fig:2}. 
\begin{figure}[htbp!]
\centering
\includegraphics[scale=0.45]{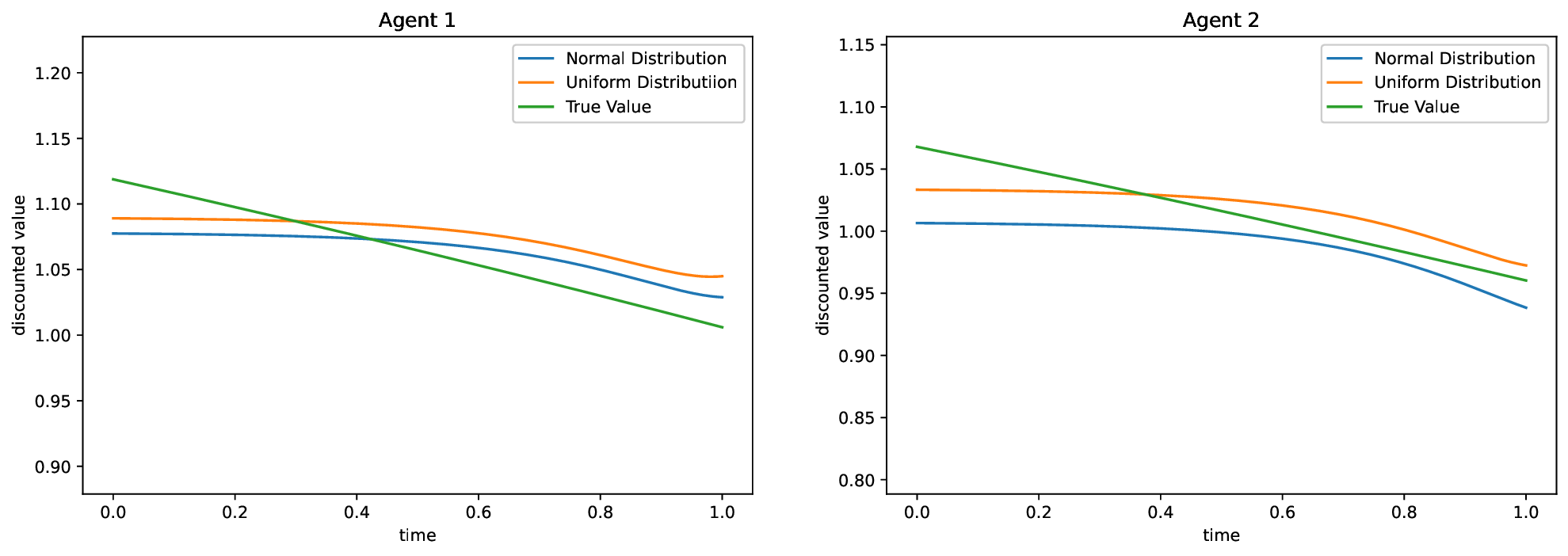}
\caption{The mean value of Nash equilibrium}
\label{fig:2}
\end{figure}

We examine the performance of the mean value of Nash equilibrium when both agents follow normal or uniform distributions. In particular, the forms of the Choquet regularizers can be found in \eqref{Normal} and \eqref{Gini}.  By comparing the learned policy with the true policy, we observed that our experimental results closely approximate the theoretical values under proper initial values. This underscores the effectiveness of our approach in approximating Nash equilibrium solutions.

\section{Conclusion}\label{sec:6}
This paper introduces a framework for continuous-time RL  in a competitive market, where two agents consider both their own wealth and their opponent's wealth under the mean-variance criterion. The Nash equilibrium distributions are derived  through dynamic programming, revealing that an agent's mean of equilibrium distribution for exploration is influenced not only by his own parameters but also by those of  his  opponent, while the variance of the distribution is solely determined by   the agent's  own model parameters.

In addition, we investigate the impact of model parameters on the equilibrium strategies, providing valuable insights into decision-making dynamics in competitive financial markets. Furthermore, we design an algorithm to study Nash equilibrium policies, and our experimental results indicate that the output of our algorithm closely approximates the theoretical value. Since the performance of our algorithm depends on the initial value selection, exploring better algorithms to learn the initial value setting and ensuring algorithm stability over time would be critical directions for future research.

\vspace{0.7cm}
\noindent\textbf{Acknowledgements.}~~The  research of  Junyi Guo is supported by the National Natural Science Foundation of China (No. 11931018 and 12271274). The  research of  Xia Han  is supported by the National Natural Science Foundation of China (Grant No. 12301604, 12371471) and the Fundamental Research Funds for the Central Universities, Nankai University (Grant No. 63231138).    


\end{document}